\documentclass{amsart}

\usepackage{latexsym}
\usepackage{amsthm,amsfonts,amssymb,amscd,mathrsfs}
\usepackage{xspace}
\input{xypic}
\xyoption{all}

\setlength{\hoffset}{-3mm}

%
%
%
%


\newcommand{\numberseries}{\mdseries}	

\newlength{\thmtopspace}		
\newlength{\thmbotspace}		
\newlength{\thmheadspace}		
\newlength{\thmindent}			

\setlength{\thmtopspace}{1.05\baselineskip plus 0.5\baselineskip minus 0.25\baselineskip}
\setlength{\thmbotspace}{0.7\baselineskip plus 0.3\baselineskip minus 0.1\baselineskip}	
\setlength{\thmheadspace}{0.5em}
\setlength{\thmindent}{0pt}


\newtheoremstyle{bfupright head,slanted body}
		{\thmtopspace}{\thmbotspace}
		{\slshape}{\thmindent}{\bfseries}{.}{\thmheadspace}
		{{\numberseries \thmnumber{(#2) }}\thmnote{#3}}

\newtheoremstyle{bfupright head,upright body}
		{\thmtopspace}{\thmbotspace}
		{\upshape}{\thmindent}{\bfseries}{.}{\thmheadspace}
		{{\numberseries \thmnumber{(#2) }}\thmnote{#3}}

\newtheoremstyle{bfit head,upright body}
		{\thmtopspace}{\thmbotspace}
		{\upshape}{\thmindent}{\upshape}{.}{\thmheadspace}
		{{\numberseries\thmnumber{(#2) }}
		{\bfseries\itshape\thmnote{\negthickspace#3}}}

\newtheoremstyle{it head,upright body}
		{\thmtopspace}{\thmbotspace}
		{\upshape}{\thmindent}{\upshape}{.}{\thmheadspace}
		{{\numberseries\thmnumber{(#2) }}
		{\itshape\thmnote{\negthickspace#3}}}


\newtheoremstyle{fixed bf head,slanted body}
		{\thmtopspace}{\thmbotspace}{\slshape}
		{\thmindent}{\bfseries}{.}{\thmheadspace}
		{{\numberseries \thmnumber{(#2) }}\thmname{#1}\thmnote{ (#3)}}

\newtheoremstyle{fixed bf head,upright body}
		{\thmtopspace}{\thmbotspace}{\upshape}
		{\thmindent}{\bfseries}{.}{\thmheadspace}
		{{\numberseries \thmnumber{(#2) }}\thmname{#1}\thmnote{ (#3)}}


\newtheoremstyle{indented paragraph}
		{\thmtopspace}{\thmbotspace}
		{\upshape}{\thmindent}{\upshape}{}{0pt}
		{\thmnote{#3 }}



\theoremstyle{bfupright head,slanted body}
\newtheorem{res}{}[section]		\newtheorem*{res*}{}

\theoremstyle{bfit head,upright body}
			\newtheorem*{com*}{}

\theoremstyle{bfupright head,upright body}
\newtheorem{bfhpg}[res]{}		\newtheorem*{bfhpg*}{}

\theoremstyle{it head,upright body}
		\newtheorem*{ithpg*}{}


\theoremstyle{fixed bf head,slanted body}
\newtheorem{thm}[res]{Theorem}		\newtheorem*{thm*}{Theorem}
\newtheorem{prp}[res]{Proposition}	\newtheorem*{prp*}{Proposition}
\newtheorem{cor}[res]{Corollary}	\newtheorem*{cor*}{Corollary}
\newtheorem{lem}[res]{Lemma}		\newtheorem*{lem*}{Lemma}

\theoremstyle{fixed bf head,upright body}
\newtheorem{dfn}[res]{Definition}	\newtheorem*{dfn*}{Definition}
\newtheorem{obs}[res]{Observation}	\newtheorem*{obs*}{Observation}
\newtheorem{rmk}[res]{Remark}		\newtheorem*{rmk*}{Remark}
\newtheorem{exa}[res]{Example}		\newtheorem*{exa*}{Example}
		\newtheorem*{exe*}{Exercise}
		\newtheorem{stp*}{Setup}
	\newtheorem*{dfns*}{Definitions}
	\newtheorem*{obss*}{Observations}
		\newtheorem*{rmks*}{Remarks}
	\newtheorem*{exas*}{Examples}


\theoremstyle{indented paragraph}



\newlength{\thmlistleft}	
\newlength{\thmlistright}	
\newlength{\thmlistpartopsep}	
\newlength{\thmlisttopsep}	
\newlength{\thmlistparsep}	
\newlength{\thmlistitemsep}	

\setlength{\thmlistleft}{3em}
\setlength{\thmlistright}{0pt}
\setlength{\thmlistitemsep}{0.5ex}
\setlength{\thmlistparsep}{0pt}
\setlength{\thmlisttopsep}{1.5\thmlistitemsep}
\setlength{\thmlistpartopsep}{0pt}


\newcounter{eqc} 
\newenvironment{eqc}{\begin{list}{\upshape (\textit{\roman{eqc}})}%
		    {\usecounter{eqc}%
		        \setlength{\leftmargin}{\thmlistleft}%
			\setlength{\labelwidth}{\thmlistleft}%
			\setlength{\rightmargin}{\thmlistright}%
			\setlength{\partopsep}{\thmlistpartopsep}%
			\setlength{\topsep}{\thmlisttopsep}%
			\setlength{\parsep}{\thmlistparsep}%
			\setlength{\itemsep}{\thmlistitemsep}}}%
		    {\end{list}}%





\newcounter{prt}
\newenvironment{prt}{\begin{list}{\upshape (\alph{prt})}%
		    {\usecounter{prt}%
		        \setlength{\leftmargin}{\thmlistleft}%
			\setlength{\labelwidth}{\thmlistleft}%
			\setlength{\rightmargin}{\thmlistright}%
			\setlength{\partopsep}{\thmlistpartopsep}%
			\setlength{\topsep}{\thmlisttopsep}%
			\setlength{\parsep}{\thmlistparsep}%
			\setlength{\itemsep}{\thmlistitemsep}}}%
		    {\end{list}}%




\newcounter{rqm}
\newenvironment{rqm}{\begin{list}{\upshape (\arabic{rqm})}%
		    {\usecounter{rqm}%
		        \setlength{\leftmargin}{\thmlistleft}%
			\setlength{\labelwidth}{\thmlistleft}%
			\setlength{\rightmargin}{\thmlistright}%
			\setlength{\partopsep}{\thmlistpartopsep}%
			\setlength{\topsep}{\thmlisttopsep}%
			\setlength{\parsep}{\thmlistparsep}%
			\setlength{\itemsep}{\thmlistitemsep}}}%
		    {\end{list}}%




%
			{\end{list}}%


\newenvironment{itemlist}{\nopagebreak \begin{list}{$\bullet$}%
		       {\setlength{\leftmargin}{\thmlistleft}%
			\setlength{\labelwidth}{\thmlistleft}%
			\setlength{\rightmargin}{\thmlistright}%
			\setlength{\partopsep}{\thmlistpartopsep}%
			\setlength{\topsep}{\thmlisttopsep}%
			\setlength{\parsep}{\thmlistparsep}%
			\setlength{\itemsep}{\thmlistitemsep}}}%
			{\end{list}}%

			{\end{list}}%
\newlength{\myindent}
{\setlength{\myindent}{\parindent}\begin{list}{}%
			{\setlength{\leftmargin}{#1}\setlength{\rightmargin}{#1}%
			\setlength{\partopsep}{0pt}%
			\setlength{\topsep}{\thmtopspace}%
			\setlength{\parsep}{0pt}%
			\setlength{\itemsep}{0pt}}
			\item[]}
			{\end{list}}%



\newenvironment{proof*}{\begin{proof}}{\renewcommand{\qed}{} \end{proof}}









\newlength{\seqsplit}
\setlength{\seqsplit}{0.5ex}

\title[Modules with cosupport and injective functors]{Modules with
  cosupport and injective functors}

\author{Henrik Holm} 

\address{\begin{flushleft} Department of Natural Sciences,
  Faculty of Life Sciences, University of Copenhagen, Thorvaldsensvej
  40, 6th floor, DK-1871 Frederiksberg C, Denmark \end{flushleft}}

\email{hholm@life.ku.dk} 

\urladdr{http://www.dina.life.ku.dk/$\sim$hholm}

\keywords{Algebraically compact; coherent; contravariantly finite;
  cosupport; cotorsion pairs; covariantly finite; covers; direct
  limits; envelopes; equivalence; filtered colimits; flat functors;
  functor category; injective functors; noetherian; pure injective;
  stability; support.}
 
\subjclass[2000]{16E80 (primary), 16E30, 18E15, 18G05 (secondary).}




\newcommand{\Ker}{\operatorname{Ker}}
\newcommand{\Hom}{\operatorname{Hom}}
\newcommand{\Ext}{\operatorname{Ext}}
\newcommand{\Tor}{\operatorname{Tor}}

\newcommand{\Ab}{\mathsf{Ab}}
\newcommand{\proj}{\mathsf{proj}}
\newcommand{\Flat}{\mathsf{Flat}}
\newcommand{\Inj}{\mathsf{Inj}}
\newcommand{\PureInj}{\mathsf{PureInj}}

\newcommand{\add}{\mathsf{add}}
\newcommand{\Add}{\mathsf{Add}}
\newcommand{\Prod}{\mathsf{Prod}}
\renewcommand{\mod}{\mathsf{mod}}
\newcommand{\RMod}[1][R]{#1\textnormal{-}\mathsf{Mod}}
\newcommand{\ModR}[1][R]{\mathsf{Mod}\textnormal{-}#1}

\newcommand{\ZZ}{\mathbb{Z}}
\newcommand{\QZ}{\mathbb{Q}/\mathbb{Z}}

\newcommand{\B}{\mathcal{B}}
\newcommand{\C}{\mathcal{C}}
\newcommand{\E}{\mathcal{E}}
\newcommand{\M}{\mathcal{M}}

\newcommand{\Rop}{R^\mathrm{op}}
\newcommand{\Bop}{\B^\mathrm{op}}

\newcommand{\lora}{\longrightarrow}


\begin{document}

\begin{abstract} 
  Several authors have studied the filtered colimit closure
  $\varinjlim\B$ of a class $\B$ of finitely presented modules.
  Lenzing called $\varinjlim\B$ the category of modules with support
  in $\B$, and proved that it is equivalent to the category of flat
  objects in the functor category $(\Bop,\Ab)$.  In this paper, we
  study the category $(\ModR)^\B$ of modules with cosupport in $\B$.
  We show that $(\ModR)^\B$ is equivalent to the category of injective
  objects in $(\B,\Ab)$, and thus recover a classical result by
  Jensen-Lenzing on pure injective modules.  Works of
  Angeleri-H{\"u}gel, Enochs, Krause, Rada, and Saor{\'{\i}}n make it
  easy to discuss covering and enveloping properties of $(\ModR)^\B$,
  and furthermore we compare the naturally associated notions of
  $\B$-coherence and $\B$-noetherianness. Finally, we prove a number
  of stability results for $\varinjlim\B$ and $(\ModR)^\B$. Our
  applications include a generalization of a result by Gruson-Jensen
  and Enochs on pure injective envelopes of flat modules.
\end{abstract}

\maketitle


\section*{Introduction}

Let $B$ be a finitely presented left module over a ring $R$, and let
$\Lambda$ be its endo\-morphism ring. Since $B$ is a
left-$\Lambda$-left-$R$-bimodule, one can consider the functors
\begin{displaymath}
  \xymatrix@C=20ex{
    \RMod \, \ar@<0.7ex>[r]^-{\Hom_R^{}(B,-)} &
    \ \ModR[\Lambda]. 
    \ar@<0.7ex>[l]^-{-\otimes^{}_{\!\Lambda} B} 
  }
\end{displaymath}

An important observation in Auslander's work on representation theory
for Artin algebras is that these functors give an equivalence between
$\add\,B$ and $\proj\text{-}\Lambda$; see notation in
\eqref{bfhpg:notation}.  Actually, it follows by Lazard \cite{DL64}
that the functors above also induce an equivalence between
$\varinjlim(\add\,B)$ and $\Flat\text{-}\Lambda$. In \cite{HL83}
Lenzing generalizes this result even further by proving that for any
additive category $\B$ of finitely presented left $R$-modules, the
Yoneda functor,
\begin{displaymath}
  \RMod \lora (\Bop,\Ab) \quad ,\quad M \longmapsto
  \Hom_R(-,M)|_\B
\end{displaymath}
restricts to an equivalence between $\varinjlim\B$ and the category
$\Flat(\Bop,\Ab)$ of flat functors in the sense of Oberst-R{\"o}hrl
\cite{OR70} and Stenstr{\"o}m \cite{BS68}.  The category
$\varinjlim\,\B$ has several nice properties, and it has been studied
in great detail by e.g.~the authors of \cite{LAH03},
\cite{LAH-JS-JT-08}, \cite{LAH-JT-04}, \cite{WCB94}, \cite{REB06},
\cite{HK-OS-03}, and \cite{HL83}.

In this paper, we study the category of modules with
\textsl{cosupport} in $\B$,
\begin{displaymath}
  (\ModR)^\B = \Prod\{
  \Hom_\ZZ(B,\QZ) \,|\, B \in \B \}.
\end{displaymath}
The main theorem of Section \ref{sec:InjFunc} is a result dual to that
of Lenzing \cite[prop.~2.4]{HL83}.
\begin{res*}[Theorem A]
  The tensor embedding (which is not necessarily an embedding),
  \begin{displaymath}
    \ModR \lora (\B,\Ab) \quad ,\quad N \longmapsto
    (N\otimes_R-)|_\B
  \end{displaymath}
  restricts to an equivalence between $(\ModR)^\B$ and $\Inj(\B,\Ab)$.
\end{res*}

Two special cases of Theorem A are worth mentioning: If \mbox{$\B =
  \add\,B$} for some finitely presented module $B$ with endomorphism
ring $\Lambda$, it follows that the functors
\begin{displaymath}
  \xymatrix@C=20ex{
    \ModR \, \ar@<0.7ex>[r]^-{-\otimes^{}_{\!R} B} & \ \RMod[\Lambda]
    \ar@<0.7ex>[l]^-{\Hom^{}_{\Lambda}(B,-)} 
  }
\end{displaymath}
induce an equivalence between $\Prod\{ \Hom_\ZZ(B,\QZ) \}$ and
$\Lambda\text{-}\Inj$. For \mbox{$\B=R\text{-}\mod$} we get an
equivalence between the category of pure injective right $R$-modules
and $\Inj(R\text{-}\mod,\Ab)$. We refer to 
Jensen-Lenzing \cite[thm.~B.16]{CUJ-HL-book}\footnote{\ Unfortunately,
  the proof of Jensen-Lenzing \cite[thm.~B.16]{CUJ-HL-book} does not
  apply to give a proof of Theorem A, as one key ingredient in their
  argument is the fact that the tensor embedding
  \begin{displaymath}
    \ModR \lora (R\text{-}\mod,\Ab) \quad ,\quad N \longmapsto
    (N\otimes_R-)|_{R\text{-}\mod}
  \end{displaymath}
  is fully faithful. If \mbox{$R \notin \B$} the tensor ``embedding''
  in Theorem A is, in general, neither full nor faithful as Example
  \eqref{exa:not_full_not_faithful} shows. Our proof of Theorem A uses
  techniques, such as Eilenberg's swindle and tensor products of
  functors, different from those found in the proof of
  \cite[thm.~B.16]{CUJ-HL-book}.} for this classical result.

In Section \ref{sec:EnvCov} we investigate enveloping and covering
properties of $(\ModR)^\B$.  One easy consequence of
Theorem A is the following:
\begin{res*}[Theorem B]
  The class $(\ModR)^\B$ is enveloping in $\ModR$. In addition, for a
  ho\-mo\-mor\-phism \mbox{$h \colon N \lora I$} with $I$ in
  $(\ModR)^\B$, the following conditions are equivalent:
  \begin{eqc}
  \item $h$ is a $(\ModR)^\B$-envelope;
  \item $h$ is $\B$-essential $\B$-monomorphism, cf.~Definition
    \eqref{dfn:essential}.
  \end{eqc}
\end{res*}

Theorem B is not new, but it does cover several references in the
literature: That $(\ModR)^\B$ is enveloping also follows from
Enochs-Jenda-Xu \cite[thm.~2.1]{EEE-OMGJ-JX-93} and Krause
\cite[cor.~3.15]{HK01}. In the case where $R$ is in $\varinjlim\B$,
the class of short \mbox{$(-\otimes_R\B)$}-exact sequences constitutes
a \textsl{proper class} in the sense of Stenstr{\"o}m
\cite[\S2]{BTS67}, and hence Theorem B also contains
\cite[prop.~4.5]{BTS67}.

We stress that the hard parts of the proof of Theorem~C below follow
from refe\-rences to works of Angeleri-H{\"u}gel, Krause, Rada
and Saor{\'{\i}}n, \cite{LAH00}, \cite{HK01}, \cite{HK-MS-98},
\cite{JR-MS-98}.

\enlargethispage{3ex}

\begin{res*}[Theorem C]
  For the full subcategory $(\ModR)^\B$ of $\ModR$, the following
  conditions are equivalent:
  \begin{eqc}
  \item It is closed under coproducts;
  \item It is closed under direct limits;
  \item It is precovering;
  \item It is covering;
  \item It is closed under pure submodules;
  \item It is closed under pure submodules, pure quotients, and pure
    extensions;
  \item It equals $\Add\,E$ for some right $R$-module $E$.
  \end{eqc}
\end{res*}

If the conditions in Theorem C are satisfied, $R$ is called
\textsl{$\B$-noetherian}. In \eqref{dfn:Bcoherent} we define what it
means for $R$ to be \textsl{$\B$-coherent}. Using this terminology, we
give in \eqref{prp:cotorsion} a criterion for the existence of a
cotorsion pair $(\M,(\ModR)^\B)$ of finite type.

In Section \ref{sec:stability} we prove stability results for modules
with (co)support in $\B$, e.g.

\begin{res*}[Theorem D]
  A module $F$ is in $\varinjlim\B$ if and only if $\,\Hom_{\ZZ}(F,\QZ)$
  is in $(\ModR)^\B$.
\end{res*}

\begin{res*}[Theorem E]
  Assume that $R$ is in $\varinjlim\B$. Then $R$ is $\B$-noetherian if
  and only if
  \begin{rqm}
  \item $R$ is $\B$-coherent, and
  \item Any right $R$-module $E$ is in $(\ModR)^\B$ if only if
    $\,\Hom_\ZZ(E,\QZ)$ is in $\varinjlim\B$.
  \end{rqm}
\end{res*}

\noindent
We point out a couple of applications of the stability theorems above:

Corollary \eqref{cor:cap} gives conditions on a class $\E$ which
ensure that \mbox{$\E \cap \PureInj\text{-}R$} has the form
$(\ModR)^\B$. In Example \eqref{exa:Iwanaga} we apply \eqref{cor:cap}
to describe the modules with cosupport in the category of
$G$-dimension zero modules over a Gorenstein ring.

Corollary \eqref{cor:Lenzing-like} describes some new properties for
the class $\varinjlim\B$ of modules with support in $\B$. These
properties are akin to those found in Lenzing \cite[\S2]{HL83}.

Corollary \eqref{cor:PE} generalizes a result by Gruson-Jensen
\cite{LG-CUJ-80} and Enochs \cite{EEE87} which asserts that over a
coherent ring, the pure injective envelope of a flat module is flat.

The paper ends with Appendix \ref{sec:app} where we show two results
on injective and flat functors. These results are needed to prove the
stability theorems in Section \ref{sec:stability}.

\enlargethispage{7ex}

\section{Preliminaries}

In this preliminary section, we introduce our notation, define modules
with cosuppport in $\B$, and briefly present some relevant background
material.

\begin{bfhpg}[Setup]
  \label{bfhpg:setup}
  Throughout this paper, $R$ is any unital ring and $\B$ denotes any
  additive full subcategory of the category of finitely presented left
  $R$-modules.
\end{bfhpg}

\begin{bfhpg}[Notation]
  \label{bfhpg:notation}
  We write $\RMod$/$\ModR$ for the category of left/right $R$-modules,
  and $\Ab$ for the category of abelian groups. As in Krause-Solberg
  \cite{HK-OS-03}, we define for \mbox{$\C \subseteq \RMod$} four full
  subcategories of $\RMod$ by specifying their objects as below.
  \begin{itemlist}
  \item $\add\,\C$ -- direct summands of finite (co)products
    of modules from $\C$;
  \item $\Add\,\C$ -- direct summands of arbitrary coproducts
    of modules from $\C$;
  \item $\Prod\,\C$ -- direct summands of arbitrary products
    of modules from $\C$;
  \item $\varinjlim \C$ -- filtered colimits,
    cf.~\cite[IX.\S1]{SML-book}, of modules from $\C$.
  \end{itemlist}
  Some authors \cite{LAH-JS-JT-08}, \cite{LAH-JT-04}, \cite{HK-OS-03}
  use the notation $\varinjlim \C$---others \cite{LAH03},
  \cite[\S4]{WCB94} write $\vec{\C}$.  The following specific
  categories of modules play a central role in our examples.
   \begin{itemlist}
   \item $\mod$ -- finitely presented modules;
   \item $\proj$ -- finitely generated projective modules;
   \item $\Flat$ -- flat modules;
   \item $\Inj$ -- injective modules;
   \item $\PureInj$ -- pure injective modules.
   \end{itemlist}
\end{bfhpg}

\begin{dfn}
  \label{dfn:cosupport}
  Modules \textsl{with support in $\B$} was defined by Lenzing
  \cite{HL83},
  \begin{displaymath}
    (\RMod)_\B = \varinjlim\B.
  \end{displaymath}
  In this paper we study the category of right $R$-modules
  \textsl{with cosupport in $\B$},
  \begin{displaymath}
    (\ModR)^\B = \Prod\{ \Hom_\ZZ(B,\QZ) \,|\, B \in \B \}.
  \end{displaymath}
\end{dfn}

\begin{exa}
  \label{exa:ordinary_inj}
  The following is well-known.
  \begin{prt}
  \item If $\B=R\text{-}\proj$ then $(\ModR)^\B=\Inj\text{-}R$. 
  \item If $\B=R\text{-}\mod$ then $(\ModR)^\B=\mathsf{PureInj}\text{-}R$. 
  \end{prt}
\end{exa}

\begin{exa}
  \label{exa:C}
  Let $R$ be commutative and noetherian, let $C$ be a
  semiduali\-zing\footnote{%
    \ A finitely generated module is semidualizing if the homothety
    map $R \lora \Hom_R(C,C)$ is an isomorphism.  Semidualizing
    modules have been studied under different names by
    Foxby~\cite{foxby:gmarm} (PG-modules of rank one),
    Golod~\cite{golod:gdgpi} (suitable modules), and
    Vasconcelos~\cite{vasconcelos:dtmc} (spherical modules).}
  $R$-module, and let \mbox{$\B=\add\,C$}. Combining Example
  \eqref{exa:ordinary_inj}(a) with the isomorphism
  \begin{displaymath}
    \Hom_\ZZ(C,\QZ) \cong \Hom_R(C,\Hom_\ZZ(R,\QZ)),
  \end{displaymath}
  it is easily seen that $(\ModR)^\B$ consists exactly of modules of
  the form $\Hom_R(C,E)$, where $E$ is injective.  These modules play
  a central role in e.g.~\cite{EEE-SY-04}, \cite{EEE-OMGJ-96},
  \cite{HH-PJ-06}.
\end{exa}

\begin{exa}
  \label{exa:Iwanaga}
  Assume that $R$ is Iwanaga-Gorenstein, that is, $R$ is two-sided
  noetherian and has finite injective dimension from both sides.
  Consider:
  \begin{itemlist}
  \item[--] The class $\B$ of $G$-dimension zero\footnote{\ A
      f.g.~$R$-module $B$ is of $G$-dimension zero if $\Ext^{\geqslant
        1}(B,R)=0=\Ext^{\geqslant 1}(\Hom(B,R),R)$ and if the
      biduality homomorphism $B \lora \Hom(\Hom(B,R),R)$ is an
      isomorphism.} left $R$-modules, cf.~Auslander-Bridger
    \cite{MA-MB-69};
  \item[--] The class $\E$ of Gorenstein injective\footnote{\ $M$ is
      Gorenstein injective if there is an exact sequence
      $\boldsymbol{E} = \cdots \to E_1 \to E_0 \to E_{-1} \to \cdots$
      of injective modules such that $\Hom(I,\boldsymbol{E})$ is exact
      for all injective $I$ and $M \cong \Ker(E_0 \to E_{-1})$.} right
    $R$-modules, cf.~Enochs-Jenda \cite{EEE-OMGJ-95}.
  \end{itemlist}
  Then there is an equality, $(\ModR)^\B = \E \cap \PureInj\text{-}R$.
\end{exa}

\begin{proof}
  We apply Corollary \eqref{cor:cap}. By \cite[thm.~2.6]{HH04} the
  class $\E$ is closed under products and direct summands.  Condition
  \eqref{cor:cap}(1) holds by \cite[prop.~3.8]{LWC-AF-HH-06} and
  \cite[thm.~3.6]{HH04}; and condition \eqref{cor:cap}(2) holds by
  \cite[cor.~10.3.9]{EEE-OMGJ-book} and
  \cite[thm.~10.3.8]{EEE-OMGJ-book}.
\end{proof}

\begin{bfhpg}[Functor categories]
  \label{bfhpg:functor_categories}
  Let $\C$ be any additive and and skeletally small category, for
  example \mbox{$\C=\B$} from Setup \eqref{bfhpg:setup}. We adopt the
  notation of \cite{WCB94}, \cite{HK-OS-03} and write $(\C,\Ab)$ for
  the category of all additive covariant functors \mbox{$\C \lora
    \Ab$}.

  It is well-known, cf.~\cite[II.\S1]{PG62} that $(\C,\Ab)$ is an
  abelian category with small Hom-sets, and that $(\C,\Ab)$ admits the
  same categorical constructions (such as exact direct limits) as
  $\Ab$ does.  The representable functors $\C(C,-)$ are projective
  objects, and they constitute a generating set. Thus $(\C,\Ab)$ has
  injective hulls in the sense of \cite[II.\S5, \S6]{PG62}. We write
  $\Inj(\C,\Ab)$ for the category of injective objects in $(\C,\Ab)$.

  A functor $F$ is \textsl{finitely generated} if there is an exact
  sequence \mbox{$\C(C,-) \to F \to 0$} for some \mbox{$C \in
    \C$}. Similarly, $F$ is \textsl{finitely presented} if there
  exists an exact sequence \mbox{$\C(C_1,-) \to \C(C_0,-) \to F
    \to 0$} with $C_0,C_1 \in \C$.
\end{bfhpg}


\begin{bfhpg}[Flat functors]
  \label{bfhpg:flat_functors}
  Oberst-R{\"o}hrl \cite[\S1]{OR70} and Stenstr{\"o}m \cite[\S3]{BS68}
  construct over any pre\-additive and skeletally small category $\C$ a
  right exact tensor product,
  \begin{displaymath}
    (\C^\mathrm{op},\Ab) \times (\C,\Ab) \lora \Ab \quad , \quad (F,G)
    \longmapsto F \otimes_\C G
  \end{displaymath}
  which has the following properties for all $F$ and $G$ as above, and
  all $A \in \Ab$.
  \begin{prt}
  \item $\Hom_\ZZ(F \otimes_\C G,A) \cong (\C,\Ab)(G,\Hom_\ZZ(F,A))
      \cong (\C^\mathrm{op},\Ab)(F,\Hom_\ZZ(G,A))$.
  \item $F\otimes_\C\C(C,-) \cong FC$ and $\C(-,C)\otimes_\C G
    \cong GC$.
  \end{prt}
  A functor $F$ in \mbox{$(\C^\mathrm{op},\Ab)$} is \textsl{flat} if
  \mbox{$F\,\otimes_\C\,?$} is exact, however,
  \cite[thm.~(1.3)]{WCB94}, \cite[thm.~(3.2)]{OR70}, and
  \cite[thm.~3]{BS68} contain several equivalent characterizations of
  flatness.

  We write $\Flat(\C^\mathrm{op},\Ab)$ for the category of flat
  functors in $(\C^\mathrm{op},\Ab)$.
\end{bfhpg}


\section{An equivalence between two categories.} \label{sec:InjFunc}

In this section, we prove that the category $(\ModR)^\B$ of modules
with cosupport in $\B$ is equivalent to the category of injective
objects in the functor category $(\B,\Ab)$.

\begin{dfn}
  \label{dfn:TA}
  The \textsl{tensor embedding} with respect to $\B$ is the following
  functor,
  \begin{displaymath}
    \ModR \lora (\B,\Ab) \quad ,\quad N \longmapsto
    (N\otimes_R-)|_\B.
  \end{displaymath}
\end{dfn}

\begin{rmk}
  For \mbox{$\B=R\text{-}\mod$} the tensor embedding has been studied
  in e.g.~\cite{MA78}, \cite{GG04}, \cite{CUJ-HL-book}, \cite{HK01}.
  In this case, the tensor embedding is fully faithful as the inverse
  of
  \begin{displaymath}
    \Hom_{\Rop}(M,N) \stackrel{\cong}{\lora}
    (\B,\Ab)\big((M\otimes_R-)|_\B, (N\otimes_R-)|_\B\big)
  \end{displaymath}
  is given by evaluating a natural transformation on the ground ring
  $R$.
\end{rmk}

\begin{exa}
  \label{exa:not_full_not_faithful}
  For general $\B$, the tensor ``embedding'' is neither full nor
  faithful.  To see this, let $R=\ZZ$, let $p \neq q$ be prime
  numbers and set $\B=\add\,\ZZ/(p)$.
  \begin{prt}
  \item As \mbox{$\ZZ/(p)\otimes_\ZZ\ZZ/(p) \cong \ZZ/(p)$} the
    functors $(\ZZ/(p)\otimes_\ZZ-)|_\B$ and $(\ZZ\otimes_\ZZ-)|_\B$
    are equivalent, and since $\Hom_{\ZZ}(\ZZ/(p),\ZZ)\cong 0$, the tensor
    embedding is not full.
  \item As \mbox{$\ZZ/(q)\otimes_\ZZ\ZZ/(p) \cong 0$} we get
    \mbox{$(\ZZ/(q)\otimes_\ZZ-)|_\B=0$}, so from the isomorphism
    \mbox{$\Hom_{\ZZ}(\ZZ/(q),\ZZ/(q))\cong \ZZ/(q)$}, the tensor
    embedding cannot be faithful.
  \end{prt}
\end{exa}

Part (d) of the next result shows that the tensor embedding does
become fully faithful when appropriately restricted.

\begin{prp}
  \label{prp:formula}
  The following conclusions hold:
  \begin{prt}
  \item The tensor embedding, cf.~Definition \eqref{dfn:TA}, is
    additive and commutes with small filtered colimits and products.
  \item For $B \in \B$ there is a natural equivalence of functors $\B
    \lora \Ab$,
    \begin{displaymath}
      (\Hom_\ZZ(B,\QZ)\otimes_R-)|_\B \,\simeq\, \Hom_\ZZ(\B(-,B),\QZ).
    \end{displaymath}    
  \item For $F \in (\B,\Ab)$ and $B \in \B$ there is a natural
    isomorphism of abelian groups,
  \begin{displaymath}
    \Hom_\ZZ(FB,\QZ) \stackrel{\cong}{\lora}
    (\B,\Ab)\big(F,(\Hom_\ZZ(B,\QZ)\otimes_R-)|_\B\big).
  \end{displaymath}
\item Let $N$ and $I$ be right $R$-modules where $I \in (\ModR)^\B$.
  The homomorphism of abelian groups induced by the tensor
  embedding is then an isomorphism,
  \begin{displaymath}
    \Hom_{\Rop}(N,I) \stackrel{\cong}{\lora}
    (\B,\Ab)\big((N\otimes_R-)|_\B,(I\otimes_R-)|_\B\big).
  \end{displaymath}
  \end{prt}
\end{prp}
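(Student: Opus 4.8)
The plan is to prove the four parts in order, using (a) to bootstrap (b), (b) together with property \eqref{bfhpg:flat_functors}(a)--(b) of the tensor product of functors to get (c), and then (c) plus an Eilenberg-swindle/limit argument to reach (d).

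First, for part (a): additivity is immediate, and commutation with filtered colimits and products follows from the fact that the ordinary tensor product $N \otimes_R -$ commutes with filtered colimits in $N$ (always) and, because each $B \in \B$ is \emph{finitely presented}, the functor $- \otimes_R B$ commutes with products as well; passing to functors $\B \to \Ab$ these are computed objectwise, so the claimed commutations hold. Next, for part (b), I would evaluate the left-hand functor at an object $B' \in \B$: $(\Hom_\ZZ(B,\QZ) \otimes_R B') = \Hom_\ZZ(B,\QZ) \otimes_R B'$, and use the standard adjunction/Hom-evaluation isomorphism $\Hom_\ZZ(B,\QZ) \otimes_R B' \cong \Hom_\ZZ(\Hom_R(B',B),\QZ) = \Hom_\ZZ(\B(B',B),\QZ)$; this isomorphism is natural in $B'$ because $B'$ is finitely presented (so the canonical map $\Hom_\ZZ(B,A) \otimes_R B' \to \Hom_\ZZ(\Hom_R(B',B),A)$ is an isomorphism for $A = \QZ$), giving the asserted equivalence of functors $\B \to \Ab$.

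For part (c), the target functor is $(\Hom_\ZZ(B,\QZ) \otimes_R -)|_\B$, which by (b) is naturally equivalent to $\Hom_\ZZ(\B(-,B),\QZ)$. Therefore
\begin{displaymath}
  (\B,\Ab)\big(F,(\Hom_\ZZ(B,\QZ)\otimes_R-)|_\B\big) \cong (\B,\Ab)\big(F,\Hom_\ZZ(\B(-,B),\QZ)\big),
\end{displaymath}
and now I invoke \eqref{bfhpg:flat_functors}(a) with $\C = \B$, the functor $\B(-,B) \in (\Bop,\Ab)$, and $A = \QZ$: this gives $(\B,\Ab)\big(F,\Hom_\ZZ(\B(-,B),\QZ)\big) \cong \Hom_\ZZ(\B(-,B) \otimes_\B F, \QZ)$, and by \eqref{bfhpg:flat_functors}(b), $\B(-,B) \otimes_\B F \cong FB$. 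Composing all of these yields the natural isomorphism $\Hom_\ZZ(FB,\QZ) \xrightarrow{\cong} (\B,\Ab)\big(F,(\Hom_\ZZ(B,\QZ)\otimes_R-)|_\B\big)$; I would also check that the composite is the map induced by evaluation, which is what makes it natural in both $F$ and $B$.

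Finally, part (d) is the main obstacle and where the real work lies. Writing $T(N) = (N \otimes_R -)|_\B$ for the tensor embedding, I want $\Hom_{\Rop}(N,I) \to (\B,\Ab)(T(N),T(I))$ to be an isomorphism when $I \in (\ModR)^\B = \Prod\{\Hom_\ZZ(B,\QZ) \mid B \in \B\}$. By part (a) the tensor embedding commutes with products, and $\Hom_{\Rop}(N,-)$ and $(\B,\Ab)(T(N),-)$ both send products to products, so it suffices to treat the case $I = \Hom_\ZZ(B,\QZ)$ for a single $B \in \B$ (and then a direct summand argument handles $\Prod$). In that case the right-hand side is $(\B,\Ab)\big(T(N),(\Hom_\ZZ(B,\QZ)\otimes_R-)|_\B\big)$, which by part (c) applied with $F = T(N)$ equals $\Hom_\ZZ\big(T(N)(B),\QZ\big) = \Hom_\ZZ(N \otimes_R B,\QZ)$; and the left-hand side is $\Hom_{\Rop}(N,\Hom_\ZZ(B,\QZ)) \cong \Hom_\ZZ(N \otimes_R B,\QZ)$ by the standard Hom-tensor adjunction. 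The crux is to verify that under these two identifications the map induced by the tensor embedding becomes the identity on $\Hom_\ZZ(N\otimes_R B,\QZ)$ — this is a diagram chase tracking a homomorphism $h\colon N \to \Hom_\ZZ(B,\QZ)$ through its image natural transformation, evaluated at $B$, and comparing with the adjunction isomorphism; it is routine but must be done carefully since the tensor embedding is not full or faithful in general (Example \eqref{exa:not_full_not_faithful}), so the restriction to $I \in (\ModR)^\B$ is essential and cannot be bypassed by a global fully-faithfulness statement.
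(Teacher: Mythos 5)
Your proposal is correct and follows essentially the same route as the paper: (a) by standard facts about tensor products and finitely presented modules, (b) by the Hom--tensor (``Cartan--Eilenberg'') isomorphism for finitely presented modules, (c) by composing (b) with the adjunction and evaluation properties of the functor tensor product, and (d) by reducing to the case $I=\Hom_\ZZ(B,\QZ)$ via additivity and commutation with products, then applying (c) and the usual adjunction. One small remark: in your opening plan you mention an ``Eilenberg-swindle/limit argument'' for (d), but your actual argument (correctly) uses only the product/direct-summand reduction --- the Eilenberg swindle belongs to the later Lemma on direct summands of $(I\otimes_R-)|_\B$, not here. You are also somewhat more scrupulous than the printed proof in flagging that one must check the composite of the abstract isomorphisms agrees with the map actually induced by the tensor embedding; the paper leaves this implicit, and your observation that the naturality of the adjunction and evaluation isomorphisms makes this a routine chase is the right way to discharge it.
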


\begin{proof}
  ``(a)'': Clearly, the tensor embedding is additive. It commutes with
  filtered colimits by \cite[cor.~2.6.17]{CAW-book}, and with products
  by \cite[thm.~3.2.22]{EEE-OMGJ-book}.

  ``(b)'': As $\B$ consists of finitely presented modules,
  \cite[prop.~VI.5.3]{CE56} gives that
  \begin{align*}
     (\Hom_\ZZ(B,\QZ)\otimes_R-)|_\B &\simeq
     \Hom_\ZZ(\Hom_R(-,B),\QZ)|_\B \\
     &\simeq \Hom_\ZZ(\B(-,B),\QZ).
  \end{align*}

  ``(c)'': By part (b) we get the first isomorphism in:
  \begin{align*}
    (\B,\Ab)(F,(\Hom_\ZZ(B,\QZ)\otimes_R-)|_\B)
     &\cong
    (\B,\Ab)(F,\Hom_\ZZ(\B(-,B),\QZ)) \\
     &\cong
    \Hom_\ZZ(\B(-,B)\otimes_\B F,\QZ) \\
     &\cong
    \Hom_\ZZ(FB,\QZ).
  \end{align*}
  The second and third isomorphisms are by
  \eqref{bfhpg:flat_functors}(a) and (b), respectively.

  ``(d)'': By Definition \eqref{dfn:cosupport}, $I$ is a direct
  summand of a product of modules of the form $\Hom_\ZZ(B,\QZ)$ where
  $B \in \B$.  Thus, since the tensor embedding and the covariant
  Hom-functors $\Hom_{\Rop}(N,?)$ and $(\B,\Ab)((N\otimes_R-)|_\B,?)$
  are all additive and commutes with products, we may assume that
  $I=\Hom_\ZZ(B,\QZ)$ with \mbox{$B \in \B$}. We then apply part (c)
  with $F=(N\otimes_R-)|_\B$ to get the first isomorphism in:
  \begin{align*}
    (\B,\Ab)((N\otimes_R-)|_\B,(I\otimes_R-)|_\B) 
     &\cong
     \Hom_\ZZ(N\otimes_RB,\QZ) \\
      &\cong
    \Hom_{\Rop}(N,I).
  \end{align*}
  The second isomorphism is by adjunction \cite[prop.~II.5.2]{CE56}
  and by definition of $I$.
\end{proof}

\begin{dfn}
  \label{dfn:Bmono}
  A homomorphism of right $R$-modules \mbox{$h \colon M \lora N$} is
  called a \textsl{$\B$-mono\-mor\-phism} if $h \otimes_RB$ is a
  monomorphism for all $B$ in $\B$.
\end{dfn}

\begin{lem}
  \label{lem:Bsplit}
  If $I$ has cosupport in $\B$ and \mbox{$h \colon I \lora N$} is a
  $\B$-monomorphism, then $h$ is a split monomorphism.
\end{lem}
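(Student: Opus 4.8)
The plan is to move the whole problem into the functor category $(\B,\Ab)$ via the tensor embedding of Definition \eqref{dfn:TA}, which I abbreviate $T$, to split the induced map there, and then to carry the splitting back to $\ModR$ using the partial full faithfulness recorded in \eqref{prp:formula}(d).

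First I would observe that, since $h$ is a $\B$-monomorphism, every component $h\otimes_RB$ with $B\in\B$ of the natural transformation $T(h)\colon(I\otimes_R-)|_\B\to(N\otimes_R-)|_\B$ is a monomorphism in $\Ab$; as kernels in $(\B,\Ab)$ are computed objectwise, $T(h)$ is a monomorphism in $(\B,\Ab)$.

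Next I would show that $T(I)=(I\otimes_R-)|_\B$ is an injective object of $(\B,\Ab)$. By Definition \eqref{dfn:cosupport}, $I$ is a direct summand of a product of modules $\Hom_\ZZ(B,\QZ)$ with $B\in\B$; since $T$ is additive and commutes with products by \eqref{prp:formula}(a), and since products and direct summands of injectives are injective, it suffices to treat $I=\Hom_\ZZ(B,\QZ)$ for a single $B\in\B$. Here \eqref{prp:formula}(b) gives $T(I)\simeq\Hom_\ZZ(\B(-,B),\QZ)$, and combining \eqref{bfhpg:flat_functors}(a) with \eqref{bfhpg:flat_functors}(b) yields, naturally in $G\in(\B,\Ab)$, an isomorphism $(\B,\Ab)\big(G,\Hom_\ZZ(\B(-,B),\QZ)\big)\cong\Hom_\ZZ(\B(-,B)\otimes_\B G,\QZ)\cong\Hom_\ZZ(GB,\QZ)$. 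The functor $G\mapsto\Hom_\ZZ(GB,\QZ)$ is exact, because evaluation at $B$ is exact on $(\B,\Ab)$ and $\QZ$ is injective in $\Ab$; hence $\Hom_\ZZ(\B(-,B),\QZ)$, and therefore $T(I)$ in general, is injective. A monomorphism out of an injective object splits, so there is a morphism $r\colon T(N)\to T(I)$ in $(\B,\Ab)$ with $r\circ T(h)=\mathrm{id}_{T(I)}$.

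Finally I would descend $r$. Applying \eqref{prp:formula}(d) with the cosupported module there taken to be our $I$, the map $\Hom_{\Rop}(N,I)\to(\B,\Ab)(T(N),T(I))$ induced by $T$ is bijective, so $r=T(g)$ for some $g\colon N\to I$. Then $T(gh)=T(g)\circ T(h)=r\circ T(h)=\mathrm{id}_{T(I)}=T(\mathrm{id}_I)$, and since \eqref{prp:formula}(d) applied with both modules equal to $I$ shows that $T$ is injective on $\Hom_{\Rop}(I,I)$, we conclude $gh=\mathrm{id}_I$; thus $h$ is a split monomorphism. The only step that is not purely formal is the injectivity of $T(I)$: the point to be careful about is that this argument uses no flatness hypothesis on $\B(-,B)$ — only exactness of evaluation and of $\Hom_\ZZ(-,\QZ)$ — and that \eqref{prp:formula}(d) is invoked legitimately in both of its uses, which it is precisely because $I$ has cosupport in $\B$ in each instance.
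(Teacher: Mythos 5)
Your proof is correct, but it takes a genuinely different route from the paper's. The paper argues directly in $\ModR$: the adjunction isomorphism $\Hom_{\Rop}(h,\Hom_\ZZ(B,\QZ)) \cong \Hom_\ZZ(h\otimes_RB,\QZ)$ together with injectivity of $\QZ$ in $\Ab$ shows that the $\B$-monomorphism hypothesis makes $\Hom_{\Rop}(h,\Hom_\ZZ(B,\QZ))$ surjective for every $B\in\B$; since $(\ModR)^\B$ is built from these modules by products and direct summands and $\Hom_{\Rop}(h,-)$ respects both, $\Hom_{\Rop}(h,J)$ is surjective for every $J\in(\ModR)^\B$, and applying this with $J=I$ to $\mathrm{id}_I$ produces the splitting in one step. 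Your argument instead transports the problem into $(\B,\Ab)$, checks that $T(h)$ is monic, re-establishes that $T(I)$ is injective in $(\B,\Ab)$ — which is precisely the forward implication later proved inside Theorem A, and which you rederive from \eqref{prp:formula}(b) and \eqref{bfhpg:flat_functors}(a,b) instead of citing \eqref{prp:formula}(c) — splits $T(h)$ there, and descends the retraction via \eqref{prp:formula}(d). The underlying ingredients (the same adjunction, injectivity of $\QZ$, closure of $(\ModR)^\B$ under products and summands) are identical, but you package them through the functor-category equivalence, so you reprove a nontrivial piece of Theorem A en route. The paper's version is shorter and never needs \eqref{prp:formula}(d); yours makes the link to injective objects in $(\B,\Ab)$ explicit at the cost of more machinery. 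There is no circularity either way, since everything you invoke precedes or is independent of Lemma \eqref{lem:Bsplit} in the paper's logical order (the injectivity of $T(I)$ you reprove rather than cite from Theorem A, whose essential-surjectivity half is what actually depends on this lemma).
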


\begin{proof}
  By our assumptions and by the isomorphism,
  \begin{displaymath}
    \Hom_{\Rop}(h,\Hom_\ZZ(B,\QZ)) \cong
    \Hom_\ZZ(h \otimes_R B, \QZ),
  \end{displaymath}
  it follows that $\Hom_{\Rop}(h,\Hom_\ZZ(B,\QZ))$ is surjective for
  every $B$ in $\B$. Combining this with Definition
  \eqref{dfn:cosupport}, we see that $\Hom_{\Rop}(h,J)$ is surjective
  for all $J$ with cosupport in $\B$, that is, every homomorphism
  \mbox{$I \lora J$} factors through $h$. If $I$ has cosupport in
  $\B$, we apply this to \mbox{$\mathrm{id} \colon I \lora I$} to get
  the desired conclusion.
\end{proof}

Once we have proved Theorem A, the following Lemmas
\eqref{lem:embedding} and \eqref{lem:summand} will be superfluous.
These lemmas are the key ingredients in proving essential sujectivity
of the tensor embedding when viewed as a functor from $(\ModR)^\B$ to
$\Inj\,(\B,\Ab)$.


\begin{lem}
  \label{lem:embedding}
  Every functor $F$ in $(\B,\Ab)$ can be embedded into a functor of
  the form $(I\otimes_R-)|_\B$ where $I$ has cosupport in $\B$.
\end{lem}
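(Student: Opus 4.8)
The plan is to use the injective hull of $F$ inside $(\B,\Ab)$ together with a structural description of the injectives of the form $(I\otimes_R-)|_\B$. First I would recall from \eqref{bfhpg:functor_categories} that $(\B,\Ab)$ has injective hulls, so there is a monomorphism $F\hookrightarrow E(F)$ with $E(F)$ injective; it therefore suffices to embed an arbitrary injective $E$ of $(\B,\Ab)$ into a functor $(I\otimes_R-)|_\B$ with $I$ having cosupport in $\B$, or — what is cleaner — to produce directly \emph{some} monomorphism from $F$ into such a functor, bypassing $E(F)$. The natural candidate for the target comes from Proposition \eqref{prp:formula}(c): for each $B\in\B$ the functor $G_B:=(\Hom_\ZZ(B,\QZ)\otimes_R-)|_\B$ satisfies $(\B,\Ab)(F,G_B)\cong\Hom_\ZZ(FB,\QZ)$, and since $\QZ$ is an injective cogenerator of $\Ab$, the group $\Hom_\ZZ(FB,\QZ)$ separates elements of $FB$.

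Concretely, I would take $I=\prod_{B\in\B}\prod_{\varphi\in\Hom_\ZZ(FB,\QZ)}\Hom_\ZZ(B,\QZ)$, which lies in $(\ModR)^\B=\Prod\{\Hom_\ZZ(B,\QZ)\}$ by Definition \eqref{dfn:cosupport}. By Proposition \eqref{prp:formula}(a) the tensor embedding commutes with products, so $(I\otimes_R-)|_\B\cong\prod_{B,\varphi}G_B$, and there is a canonical map
\begin{displaymath}
  \eta\colon F\lora \prod_{B\in\B}\prod_{\varphi\in\Hom_\ZZ(FB,\QZ)}G_B
\end{displaymath}
whose $(B,\varphi)$-component is the morphism $F\to G_B$ corresponding under the isomorphism of \eqref{prp:formula}(c) to $\varphi\in\Hom_\ZZ(FB,\QZ)$. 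It remains to check $\eta$ is a monomorphism in $(\B,\Ab)$, i.e.\ that $\eta_{B_0}\colon FB_0\to\prod G_B(B_0)$ is injective for every fixed $B_0\in\B$. Given a nonzero $x\in FB_0$, the cogenerator property of $\QZ$ supplies $\varphi\in\Hom_\ZZ(FB_0,\QZ)$ with $\varphi(x)\neq0$; I would then trace through the chain of isomorphisms in \eqref{prp:formula}(c) — built from \eqref{bfhpg:flat_functors}(a), \eqref{bfhpg:flat_functors}(b), and part (b) — to verify that the $(B_0,\varphi)$-component of $\eta_{B_0}(x)$ records exactly the datum $\varphi(x)$, hence is nonzero.

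The main obstacle is precisely this last bookkeeping step: making explicit what the abstract adjunction isomorphism $\Hom_\ZZ(FB_0,\QZ)\cong(\B,\Ab)(F,G_{B_0})$ does on elements, so that one can read off the $B_0$-component of the resulting natural transformation at $x$. This amounts to unwinding the evaluation $G_{B_0}(B_0)=\Hom_\ZZ(\B(B_0,B_0),\QZ)$ via \eqref{prp:formula}(b) and checking that the image of the identity $\mathrm{id}_{B_0}\in\B(B_0,B_0)$ under the transformation attached to $\varphi$ is $\varphi$ itself — a Yoneda-type computation. Once this naturality-plus-cogenerator argument is in place, injectivity of $\eta$ (and hence the embedding of $F$, a fortiori of any subfunctor, into $(I\otimes_R-)|_\B$ with $I\in(\ModR)^\B$) follows immediately, since a natural transformation of functors $\B\to\Ab$ is a monomorphism exactly when it is injective at every object of $\B$.
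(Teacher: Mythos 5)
Your proposal is correct, and it is closely related in spirit to the paper's proof but implemented differently. The paper composes two standard facts: the canonical embedding $F\hookrightarrow DD(F)$ into the double Pontryagin dual (where $D=\Hom_\ZZ(-,\QZ)$), and an epimorphism $\coprod_{B\in\B}\B(-,B)^{(U_B)}\twoheadrightarrow D(F)$ in $(\Bop,\Ab)$ supplied by Gabriel's generating-set lemma, which upon dualizing yields $DD(F)\hookrightarrow\prod_B G_B^{U_B}=(I\otimes_R-)|_\B$. You instead build the monomorphism $\eta\colon F\to\prod_{B,\varphi}G_B$ directly from Proposition~\eqref{prp:formula}(c) and the cogenerator property of $\QZ$. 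These are really the same map in disguise: if one chooses $U_B=\Hom_\ZZ(FB,\QZ)$, which is the tautological choice of generators for $D(F)$ via Yoneda, the paper's composite $F\to DD(F)\to\prod_B G_B^{U_B}$ unwinds to your $\eta$. What your approach buys is self-containedness — no need to invoke the projective-generation fact for $(\Bop,\Ab)$ — at the cost of having to do the Yoneda-type bookkeeping by hand. What the paper's approach buys is that all the pointwise verification is absorbed into the well-known double-dual embedding, so no explicit element-chasing is required. Your identification of the bookkeeping step as the crux is accurate, and your sketch of it is right: tracing $\varphi$ through $\Hom_\ZZ(FB_0,\QZ)\cong\Hom_\ZZ(\B(-,B_0)\otimes_\B F,\QZ)\cong(\B,\Ab)(F,\Hom_\ZZ(\B(-,B_0),\QZ))$ gives a transformation $\theta_\varphi$ with $(\theta_\varphi)_{B_0}(x)(\mathrm{id}_{B_0})=\varphi(x)$, which is nonzero by the choice of $\varphi$, so $\eta_{B_0}$ is injective. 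To make the write-up airtight you should record that explicit formula rather than leave it as ``trace through the chain of isomorphisms,'' but the argument as planned is sound.
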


\begin{proof}
  Applying Gabriel \cite[(proof of) II.\S1 prop.~3]{PG62} to
  \mbox{$\Hom_\ZZ(F,\QZ)$} we get a family of index sets $\{U_B\}_{B
    \in \B}$ and an exact sequence in $(\Bop,\Ab)$ of the form,
  \begin{displaymath}
    \textstyle{\coprod}_{B \in \B} \B(-,B)^{(U_B)}
    \lora \Hom_\ZZ(F,\QZ) \lora 0.
  \end{displaymath}
  Applying $\Hom_\ZZ(-,\QZ)$ to this sequence, we get an exact
  sequence in $(\B,\Ab)$,
  \begin{displaymath}
    0 \lora \Hom_\ZZ(\Hom_\ZZ(F,\QZ),\QZ) \lora 
    \Hom_\ZZ\big(\textstyle{\coprod}_{B \in \B} \B(-,B)^{(U_B)},\QZ\big).
  \end{displaymath}
  The module $I$ defined by \mbox{$\textstyle{\prod}_{B \in
      \B}\Hom_\ZZ(B,\QZ)^{U_B}$} has cosupport in $\B$, and we have a
  natural equi\-va\-lence for the latter functor above:
  \begin{align*}
    \Hom_\ZZ\big(\textstyle{\coprod}_{B \in \B}
    \B(-,B)^{(U_B)},\QZ\big) 
    &\,\simeq\,
    \textstyle{\prod}_{B \in \B}\Hom_\ZZ(\B(-,B),\QZ)^{U_B} \\ 
    &\,\simeq\,
    \textstyle{\prod}_{B \in
      \B}(\Hom_\ZZ(B,\QZ)\otimes_R-)|_\B^{\phantom{i}U_B} \\  
    &\,\simeq\,
    (I \otimes_R-)|_\B.
  \end{align*}
  The second $\simeq$ follows by Proposition \eqref{prp:formula}(b),
  and the third one since the tensor embedding commutes with products.
  To finish the proof, we need only note that $F$ embeds into its
  double Pontryagin dual $\Hom_\ZZ(\Hom_\ZZ(F,\QZ),\QZ)$.
\end{proof}

\begin{lem}
  \label{lem:summand}
  If a functor $F$ in $(\B,\Ab)$ is a direct summand of
  $(I\otimes_R-)|_\B$ where $I$ has cosupport in $\B$, then $F(-)
  \simeq (J\otimes_R-)|_\B$ for some $J$ with cosupport in $\B$.
\end{lem}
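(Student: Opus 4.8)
The plan is to realize $F$ as the splitting of an idempotent on $(I\otimes_R-)|_\B$ and to transport that idempotent back to $I$ itself using the partial fully-faithfulness of the tensor embedding recorded in Proposition~\eqref{prp:formula}(d); splitting the resulting idempotent \emph{inside} $(\ModR)^\B$ — which is legitimate because subcategories of the form $\Prod\,\C$ are idempotent-complete — then yields the module $J$. I do not expect to need Lemmas~\eqref{lem:Bsplit} or~\eqref{lem:embedding} for this.

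Write $T$ for the tensor embedding, so $T(N)=(N\otimes_R-)|_\B$. By hypothesis there are natural transformations $\iota\colon F\lora T(I)$ and $\pi\colon T(I)\lora F$ with $\pi\iota=\mathrm{id}_F$, so that $e:=\iota\pi$ is an idempotent in the endomorphism ring of $T(I)$ whose splitting is $(F,\pi,\iota)$. Since $I$ lies in $(\ModR)^\B$, Proposition~\eqref{prp:formula}(d) applied with $N=I$ says that the map
\[
  \Hom_{\Rop}(I,I)\lora(\B,\Ab)\bigl((I\otimes_R-)|_\B,(I\otimes_R-)|_\B\bigr),\qquad f\longmapsto T(f),
\]
is bijective; it is moreover a ring homomorphism, being induced by the functor $T$. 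Hence there is a unique $\varepsilon\in\Hom_{\Rop}(I,I)$ with $T(\varepsilon)=e$, and since the map is injective while $T(\varepsilon^2)=e^2=e=T(\varepsilon)$, the endomorphism $\varepsilon$ is idempotent.

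Because $(\ModR)^\B=\Prod\{\Hom_\ZZ(B,\QZ)\mid B\in\B\}$ is closed under direct summands, the idempotent $\varepsilon$ splits within $(\ModR)^\B$: there is a module $J$ with cosupport in $\B$ together with homomorphisms $p\colon I\lora J$ and $q\colon J\lora I$ satisfying $pq=\mathrm{id}_J$ and $qp=\varepsilon$. Applying the functor $T$ gives $T(p)T(q)=\mathrm{id}_{T(J)}$ and $T(q)T(p)=T(\varepsilon)=e$, so $(T(J),T(p),T(q))$ is also a splitting of the idempotent $e$ on $T(I)$. As the splitting of an idempotent in an additive category is unique up to isomorphism, we conclude $F\simeq T(J)=(J\otimes_R-)|_\B$, which is the assertion. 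The argument is otherwise entirely formal; the one point that requires attention is that one is entitled to split $\varepsilon$ \emph{in the subcategory} $(\ModR)^\B$, not merely in $\ModR$, and this is exactly what the description of $(\ModR)^\B$ via $\Prod$ guarantees.
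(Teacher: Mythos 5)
Your proof is correct, and it takes a genuinely different (and cleaner) route than the paper's. The paper first pads $F$ via Eilenberg's swindle to obtain a decomposition $F \oplus (I^{\mathbb{N}}\otimes_R-)|_\B \simeq (I^{\mathbb{N}}\otimes_R-)|_\B$, extracts from this a short exact sequence in $(\B,\Ab)$, uses Proposition~\eqref{prp:formula}(d) to lift its monomorphism to a module map $h\colon I^{\mathbb N}\to I^{\mathbb N}$, and then invokes Lemma~\eqref{lem:Bsplit} to see that $h$ splits, taking $J=\operatorname{Coker}h$. You instead observe that Proposition~\eqref{prp:formula}(d) with $N=I$ gives a \emph{ring} isomorphism $\operatorname{End}_{\Rop}(I)\xrightarrow{\ \cong\ }\operatorname{End}_{(\B,\Ab)}(T(I))$ (a ring map because $T$ is a functor), transport the idempotent $e=\iota\pi$ back to an idempotent $\varepsilon$ on $I$, split $\varepsilon$ in the abelian category $\ModR$, and note that the resulting summand $J$ of $I$ automatically lies in $(\ModR)^\B=\Prod\{\Hom_\ZZ(B,\QZ)\mid B\in\B\}$ since the latter is closed under direct summands. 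Functoriality then carries the module-level splitting $(J,p,q)$ of $\varepsilon$ to a splitting $(T(J),T(p),T(q))$ of $e$, and uniqueness of idempotent splittings yields $F\simeq T(J)$. Your argument avoids the swindle and the auxiliary Lemma~\eqref{lem:Bsplit} entirely, at the cost of nothing; both proofs lean on the same essential ingredient, namely the restricted full faithfulness recorded in Proposition~\eqref{prp:formula}(d). One small remark on phrasing: it is not that $\varepsilon$ must be split \emph{inside the subcategory} $(\ModR)^\B$ in any delicate sense — one simply splits it in $\ModR$, where this is always possible, and then observes that the image is a direct summand of $I$ and hence lands in $(\ModR)^\B$ because that subcategory is closed under summands. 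Your closing sentence gestures at this correctly, but the earlier appeal to ``idempotent-completeness of $\Prod\,\C$'' is a slightly heavier invocation than is needed.
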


\begin{proof}
  Assume that we have a decomposition of $(I\otimes_R-)|_\B$ in
  $(\B,\Ab)$, say,
  \begin{displaymath}
    G(-) \oplus F(-) \simeq (I\otimes_R-)|_\B.
  \end{displaymath}
  Since finite products and coproducts agree in $(\B,\Ab)$, and since
  the tensor embedding commutes with products by Proposition
  \eqref{prp:formula}(a), we can use Eilenberg's swindle to obtain a
  the following natural equivalence of functors $\B \lora \Ab$,
  \begin{align*}
    F \oplus (I^{\mathbb{N}}\otimes_R-)|_\B &\,\simeq\,
    F \times (I\otimes_R-)|_\B^{\,\,\mathbb{N}} \\
    &\,\simeq\, F \times (I\otimes_R-)|_\B \times (I\otimes_R-)|_\B
    \times \cdots \\
    &\,\simeq\,
    F \times (G \times F) \times (G \times F) \times \cdots \\
    &\,\simeq\,
    (F \times G) \times (F \times G) \times (F \times \cdots \\
    &\,\simeq\, (I\otimes_R-)|_\B \times (I\otimes_R-)|_\B \times
    \cdots \\
    &\,\simeq\, (I^{\mathbb{N}}\otimes_R-)|_\B.
  \end{align*}
  In particular, we have a short exact sequence in $(\B,\Ab)$ given by
  \begin{displaymath}
    \tag{\text{$1$}}    
    0 \lora (I^\mathbb{N}\otimes_R-)|_\B \stackrel{u}{\lora}
    (I^\mathbb{N}\otimes_R-)|_\B \lora F(-) \lora 0.
  \end{displaymath}
  Since $I$ is in $(\ModR)^\B$, then so is the product $I^\mathbb{N}$.
  Thus, it follows by Proposition \eqref{prp:formula}(d) that $u$ is
  induced by a module homomorphism \mbox{$h \colon I^\mathbb{N} \lora
    I^\mathbb{N}$}, and exactness of $(1)$ shows that $h$ is a
  $\B$-monomorphism.  Now, Lemma \eqref{lem:Bsplit} implies that $h$
  is a split mo\-no\-morphism, so defining
  \mbox{$J=\operatorname{Coker}h$} gives a split exact sequence in
  $\ModR$,
  \begin{displaymath}
    \tag{\text{$2$}}    
    0 \lora I^\mathbb{N} \stackrel{h}{\lora}
    I^\mathbb{N} \lora J \lora 0.
  \end{displaymath}
  As a direct summand of $I^\mathbb{N}$, the module $J$ has cosupport
  in $\B$.  The sequence $(2)$ induces a (split) exact sequence in
  $(\B,\Ab)$,
  \begin{displaymath}
    \tag{\text{$3$}}    
    0 \lora (I^\mathbb{N}\otimes_R-)|_\B \stackrel{u}{\lora}
    (I^\mathbb{N}\otimes_R-)|_\B \lora (J\otimes_R-)|_\B \lora 0,
  \end{displaymath}
  and the desired conclusion follows by comparing $(1)$ and $(3)$.
\end{proof}

We are now ready to prove Theorem A from the Introduction. Note that
this results is well-known in the case where
\mbox{$\B=R\text{-}\mod$}, see for example
\cite[thm.\,B.16]{CUJ-HL-book}.


\begin{proof}[\textbf{Proof of Theorem A}]
  First we must argue that the functor \mbox{$(I\otimes_R-)|_\B$} is
  injective if $I$ has cosupport in $\B$. By Definition
  \eqref{dfn:cosupport} and Proposition \eqref{prp:formula}(a) we may
  assume that $I$ has the form $\Hom_\ZZ(B,\QZ)$ for some $B$ in $\B$.
  Now, let
  \begin{displaymath}
    \Xi \,=\, \ 0 \lora F' \lora F \lora F'' \lora 0
  \end{displaymath}
  be a short exact sequence in $(\B,\Ab)$, in particular,
  \begin{displaymath}
    \tag{\text{$1$}}
    0 \lora \Hom_\ZZ(F''B,\QZ) \lora \Hom_\ZZ(FB,\QZ) \lora
    \Hom_\ZZ(F'B,\QZ) \lora 0 
  \end{displaymath}
  is exact in $\Ab$. By Proposition \eqref{prp:formula}(c), the
  sequence $(1)$ is isomorphic to
  \begin{displaymath}
    \tag{\text{$2$}}
    (\B,\Ab)\big(\Xi,(\Hom_\ZZ(B,\QZ)\otimes_R-)|_\B\big)=    
    (\B,\Ab)\big(\Xi,(I\otimes_R-)|_\B\big),
  \end{displaymath}
  and since $(1)$ is exact then so is $(2)$. Thus, $(I\otimes_R-)|_\B$
  is injective in $(\B,\Ab)$.

  To show that the tensor embedding gives the claimed
  equivalence, we argue that it is fully faithful and essentially
  surjective as a functor from $(\ModR)^\B$ to $\Inj(\B,\Ab)$.

  By Proposition \eqref{prp:formula}(d), the restriction of the tensor
  embedding to $(\ModR)^\B$ is fully faithful, and essential
  surjectivity follows from Lemmas \eqref{lem:embedding} and
  \eqref{lem:summand}.
\end{proof}

\begin{rmk}
  By Theorem A, every $F$ in $\Inj(\B,\Ab)$ has the form
  \mbox{$F\simeq(I \otimes_R-)|_\B$} for a unique (up to isomorphism)
  module $I$ with cosupport in $\B$.  However, if the functor
  \mbox{$(I \otimes_R-)|_\B$} is injective, $I$ need not have
  cosupport in $\B$.

  For example, if as in \eqref{exa:not_full_not_faithful} we let
  $R=\ZZ$ and $\B=\add\,\ZZ/(p)$, it follows from the isomorphism
  \mbox{$\Hom_\ZZ(\ZZ/(p),\QZ) \cong \ZZ/(p)$} that $\ZZ/(q)$ does not
  have cosupport in $\B$.  However, $(\ZZ/(q) \otimes_\ZZ-)|_\B$ is
  the zero functor and thus it is injective.
\end{rmk}

\begin{prp}
  \label{prp:cosupp_converse}
  Let $N$ be in $\ModR$. Then \mbox{$(N \otimes_R-)|_\B$} is in
  $\Inj(\B,\Ab)$ if and only if there exist $I$ in $(\ModR)^\B$ and a
  \mbox{$(-\otimes_R\B)$}-isomorphism $N \lora I$.

  Thus, if $R\in\varinjlim\B$ then $(N \otimes_R-)|_\B \in
  \Inj(\B,\Ab)$ if and only if $N \in (\ModR)^\B$.
\end{prp}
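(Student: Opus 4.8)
The plan is to derive both assertions formally from Theorem A and Proposition \eqref{prp:formula}(d), so that very little new work is needed beyond bookkeeping.

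For the ``if'' direction of the first statement, I would start from a $(-\otimes_R\B)$-isomorphism $h\colon N\lora I$ with $I\in(\ModR)^\B$. Then the natural transformation $(h\otimes_R-)|_\B$ has all of its components $h\otimes_R B$ isomorphisms, hence is an isomorphism in $(\B,\Ab)$; it exhibits $(N\otimes_R-)|_\B\simeq(I\otimes_R-)|_\B$. Since $(I\otimes_R-)|_\B\in\Inj(\B,\Ab)$ by Theorem A, the functor $(N\otimes_R-)|_\B$ is injective as well.

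For the ``only if'' direction, suppose $(N\otimes_R-)|_\B\in\Inj(\B,\Ab)$. By Theorem A the tensor embedding restricts to an equivalence $(\ModR)^\B\lora\Inj(\B,\Ab)$, and in particular it is essentially surjective, so there are $I\in(\ModR)^\B$ and a natural isomorphism $\eta\colon(N\otimes_R-)|_\B\lora(I\otimes_R-)|_\B$ in $(\B,\Ab)$. Because $I$ has cosupport in $\B$, Proposition \eqref{prp:formula}(d) says that $\eta$ is induced by a (unique) homomorphism $h\colon N\lora I$, i.e.~$\eta_B=h\otimes_R B$ for every $B\in\B$. As $\eta$ is an isomorphism, each $h\otimes_R B$ is an isomorphism, so $h$ is the desired $(-\otimes_R\B)$-isomorphism $N\lora I$.

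Finally, for the ``Thus'' clause I would assume $R\in\varinjlim\B$. If $N\in(\ModR)^\B$ then $(N\otimes_R-)|_\B\in\Inj(\B,\Ab)$ is immediate from Theorem A. Conversely, if $(N\otimes_R-)|_\B$ is injective, the part just proved gives $I\in(\ModR)^\B$ and a $(-\otimes_R\B)$-isomorphism $h\colon N\lora I$, and it only remains to promote $h$ to an honest isomorphism. For this, choose a filtered system $(B_i)$ in $\B$ with $\varinjlim_i B_i\cong R$ as left $R$-modules (possible since $R\in\varinjlim\B$). Since $-\otimes_R-$ commutes with filtered colimits and a filtered colimit of isomorphisms is an isomorphism, $h$, identified with $h\otimes_R R\cong\varinjlim_i(h\otimes_R B_i)$, is an isomorphism; hence $N\cong I\in(\ModR)^\B$. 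I expect this last colimit argument to be the only step that is more than routine: it is precisely the hypothesis $R\in\varinjlim\B$ that makes it legitimate to ``evaluate at $R$'' and thereby recover $N$ from the restricted functor $(N\otimes_R-)|_\B$, whereas everything else is a direct consequence of Theorem A and Proposition \eqref{prp:formula}(d).
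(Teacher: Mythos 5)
Your proof is correct and follows the same route the paper takes: the first biconditional is read off from Proposition~\eqref{prp:formula}(d) together with Theorem~A (you merely spell out the ``clear'' step), and the promotion of a $(-\otimes_R\B)$-isomorphism to an actual isomorphism under $R\in\varinjlim\B$ uses exactly the paper's $\varphi=\varphi\otimes_R R\cong\varinjlim(\varphi\otimes_R B_i)$ colimit argument.
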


\begin{proof}
  The first assetion is clear from Proposition \eqref{prp:formula}(d)
  and Theorem A. For the last claim we note that if $\varphi$ is a
  \mbox{$(-\otimes_R\B)$}-isomorphism and $R \cong \varinjlim B_i$
  with $B_i \in \B$ then $\varphi$ is an isomorphism since $\varphi =
  \varphi \otimes_R R = \varphi \otimes_R (\varinjlim B_i) =
  \varinjlim(\varphi \otimes_R B_i)$.
\end{proof}

\begin{prp}
  \label{prp:supp_converse}
  Let $M$ be in $\RMod$. Then $\Hom_R(-,M)|_\B$ is in
  $\Flat(\Bop,\Ab)$ if and only if there exist $M$ in $\varinjlim\B$
  and a $\Hom_R(\B,-)$-isomorphism $F \lora M$.

  Thus, if $R\in\varinjlim\B$ then $\Hom_R(-,M)|_\B \in
  \Flat(\Bop,\Ab)$ if and only if $M \in \varinjlim\B$.
\end{prp}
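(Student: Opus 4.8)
The plan is to dualize the short proof of \eqref{prp:cosupp_converse}: Lenzing's equivalence $\varinjlim\B\simeq\Flat(\Bop,\Ab)$ of \cite[prop.~2.4]{HL83} takes over the role played there by Theorem~A, and the role of \eqref{prp:formula}(d) is played by the dual statement established below. (As printed, the first displayed biconditional should read ``there exist $F$ in $\varinjlim\B$ and a $\Hom_R(\B,-)$-isomorphism $F\lora M$''.)

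First I would prove the following dual of \eqref{prp:formula}(d): for every $F$ in $\varinjlim\B$ and every $M$ in $\RMod$, the Yoneda functor induces an isomorphism of abelian groups
\begin{displaymath}
  \Hom_R(F,M)\stackrel{\cong}{\lora}
  (\Bop,\Ab)\big(\Hom_R(-,F)|_\B,\,\Hom_R(-,M)|_\B\big).
\end{displaymath}
Indeed, write $F\cong\varinjlim_i B_i$ as a filtered colimit of objects of $\B$; since every object of $\B$ is finitely presented, $\Hom_R(B,-)$ commutes with this colimit for each $B\in\B$, so $\Hom_R(-,F)|_\B\simeq\varinjlim_i\B(-,B_i)$ in $(\Bop,\Ab)$, the colimit being formed pointwise, cf.~\eqref{bfhpg:functor_categories}. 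Hence the right-hand side above equals $\varprojlim_i(\Bop,\Ab)\big(\B(-,B_i),\Hom_R(-,M)|_\B\big)$, which the Yoneda lemma identifies with $\varprojlim_i\Hom_R(B_i,M)=\Hom_R(\varinjlim_i B_i,M)=\Hom_R(F,M)$; a routine diagram chase confirms that this composite is inverse to the map displayed above.

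Granting this, the first assertion follows as in \eqref{prp:cosupp_converse}. If $F\in\varinjlim\B$ and $\varphi\colon F\lora M$ is a $\Hom_R(\B,-)$-isomorphism, then $\Hom_R(-,\varphi)|_\B\colon\Hom_R(-,F)|_\B\lora\Hom_R(-,M)|_\B$ is an isomorphism in $(\Bop,\Ab)$ (evaluate at each $B\in\B$), and since $\Hom_R(-,F)|_\B$ is flat by \cite[prop.~2.4]{HL83} while $\Flat(\Bop,\Ab)$ is closed under isomorphism, so is $\Hom_R(-,M)|_\B$. Conversely, if $\Hom_R(-,M)|_\B$ is flat, then \cite[prop.~2.4]{HL83} supplies $F\in\varinjlim\B$ together with an isomorphism $\alpha\colon\Hom_R(-,F)|_\B\stackrel{\simeq}{\lora}\Hom_R(-,M)|_\B$ in $(\Bop,\Ab)$; by the dual of \eqref{prp:formula}(d) one has $\alpha=\Hom_R(-,\varphi)|_\B$ for a unique $R$-homomorphism $\varphi\colon F\lora M$, and evaluating $\alpha$ at each $B\in\B$ shows $\varphi$ is a $\Hom_R(\B,-)$-isomorphism. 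For the last claim, $M\in\varinjlim\B$ makes $\Hom_R(-,M)|_\B$ flat by \cite[prop.~2.4]{HL83}; and if $\Hom_R(-,M)|_\B$ is flat, the first part yields $F\in\varinjlim\B$ and a $\Hom_R(\B,-)$-isomorphism $\varphi\colon F\lora M$, so that, writing $R\cong\varinjlim_i B_i$ with $B_i\in\B$, we find $\varphi=\Hom_R(R,\varphi)=\Hom_R(\varinjlim_i B_i,\varphi)=\varprojlim_i\Hom_R(B_i,\varphi)$ is an inverse limit of isomorphisms and hence an isomorphism; thus $M\cong F\in\varinjlim\B$.

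The one step that is not purely formal is the dual of \eqref{prp:formula}(d), and within it the verification that the map induced by the Yoneda functor coincides with the identification obtained by pulling $\Hom_R(-,M)$ through the colimit presentation of $F$; the rest is bookkeeping dual to \eqref{prp:cosupp_converse}. If a dual of \eqref{prp:formula}(d) is available directly---e.g.\ among the functor-category results of Appendix~\ref{sec:app}---this step, too, reduces to a citation.
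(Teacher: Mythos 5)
Your argument is correct and follows the same outline as the paper: reduce everything to the fully-faithfulness statement that the Yoneda functor induces $\Hom_R(F,M)\cong(\Bop,\Ab)(\Hom_R(-,F)|_\B,\Hom_R(-,M)|_\B)$ for $F\in\varinjlim\B$, combine with Lenzing's equivalence for the first assertion, and use $\varphi=\Hom_R(R,\varphi)=\varprojlim\Hom_R(B_i,\varphi)$ for the second. The only difference is cosmetic: where the paper simply cites the proof of Lenzing's prop.~2.4 for that key isomorphism, you reprove it directly from representability and preservation of filtered colimits, and you also correctly flag the typographical error (``$M$'' should read ``$F$'') in the statement.
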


\begin{proof}
  By the proof of \cite[prop.~2.4]{HL83}, the homomorphism of abelian
  groups,
  \begin{displaymath}
    \Hom_R(F,M) \lora (\Bop,\Ab)(\Hom_R(-,F)|_\B,\Hom_R(-,M)|_\B),
  \end{displaymath}
  induced by the Yoneda functor \mbox{$\RMod \lora (\Bop,\Ab)$} is an
  isomorphism for all $F$ in $\varinjlim\B$. From this fact and from
  \cite[prop.~2.4]{HL83} the first assertion follows.

  Finally, if $\varphi$ is a \mbox{$\Hom_R(\B,-)$}-isomorphism and $R
  \cong \varinjlim B_i$ with $B_i \in \B$ then $\varphi$ is an
  isomorphism since $\varphi = \Hom_R(R,\varphi) = \Hom_R(\varinjlim
  B_i,\varphi) = \varprojlim\Hom_R(B_i,\varphi)$.
\end{proof}

\enlargethispage{2ex}

\section{Covers and envelopes by modules with (co)support}
\label{sec:EnvCov}

The reader is assumed to be familiar with the notions of precovering
(contravariantly finite), preenveloping (covariantly finite),
covering, and enveloping subcategories. We refer to
e.g.~\cite[chap.~5.1 and 6.1]{EEE-OMGJ-book} for the relevant
definitions.

\bigskip

By El~Bashir~\cite[thm.~3.2]{REB06} the class $\varinjlim\B$ is
covering, in particular, it is closed under coproducts in $\RMod$.
The next result due to Crawley-Boevey \cite[thm.~(4.2)]{WCB94}
and Krause \cite[prop.~3.11]{HK01} characterizes when $\varinjlim\B$
is closed under products.

\begin{thm}
  \label{thm:Bcoherent}
  The following conditions are equivalent:
  \begin{eqc}
  \item $\varinjlim\B$ is closed under products in $\RMod$;
  \item $\varinjlim\B$ is preenveloping in $\RMod$;
  \item $\B$ is preenveloping in $R\text{-}\mod$;
  \item $\varinjlim\B$ is definable.
  \end{eqc}
\end{thm}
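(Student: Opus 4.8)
The plan is to deduce the four equivalences from three elementary arguments together with the deeper results of Crawley-Boevey \cite{WCB94} and Krause \cite{HK01}. As a preliminary reduction I would replace $\B$ by $\add\,\B$; this affects neither $\varinjlim\B$ nor the property of being preenveloping in $R\text{-}\mod$, and it makes available the standard description of $\varinjlim\B$ on finitely presented modules: $F\in\varinjlim\B$ if and only if every homomorphism from a finitely presented module to $F$ factors through an object of $\B$ (cf.\ \cite{WCB94}). I would also use without further comment that $\varinjlim\B$ is always closed under direct limits, coproducts, and direct summands (the coproduct case is the El~Bashir statement \cite{REB06} noted just before the theorem), and that the Yoneda functor identifies $\varinjlim\B$ with $\Flat(\Bop,\Ab)$ (cf.\ Proposition \eqref{prp:supp_converse}). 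With this set-up, \eqclbl{iv}$\Rightarrow$\eqclbl{i} is immediate, since a definable subcategory is by definition closed under products.

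Next I would treat \eqclbl{i}$\Leftrightarrow$\eqclbl{iii} directly. Given \eqclbl{iii}: if $F_\lambda\in\varinjlim\B$ and $f\colon X\to\prod_\lambda F_\lambda$ with $X$ finitely presented, each component $X\to F_\lambda$ factors through some $B_\lambda\in\B$, and pushing all these factorizations through a common $\B$-preenvelope $X\to B$ of $X$ exhibits $f$ as factoring through $B\in\B$; hence $\prod_\lambda F_\lambda\in\varinjlim\B$ by the description above. Conversely, given \eqclbl{i} and $X\in R\text{-}\mod$, form the product $P$ of all the (set-many) homomorphisms $X\to B$ with $B\in\B$; then $P\in\varinjlim\B$, the structure map $X\to P$ factors through a finitely presented stage $P_j$ of a $\varinjlim$-presentation of $P$ since $X$ is finitely presented, and $P_j\to P$ in turn factors through some $B'\in\B$; a short diagram chase, using that every map $X\to B$ with $B\in\B$ occurs (up to an isomorphism of the target) as a projection of $X\to P$, shows the induced map $X\to B'$ is a $\B$-preenvelope. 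The same stage-factoring trick applied to a $\varinjlim\B$-preenvelope of a finitely presented module gives \eqclbl{ii}$\Rightarrow$\eqclbl{iii}, and \eqclbl{ii}$\Rightarrow$\eqclbl{i} follows from the general remark that a preenveloping class closed under direct summands is closed under products: given $\{C_i\}\subseteq\varinjlim\B$, split the projections $\prod_i C_i\to C_i$ through a $\varinjlim\B$-preenvelope of $\prod_i C_i$ to realize $\prod_i C_i$ as a summand of an object of $\varinjlim\B$.

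This leaves \eqclbl{i}$\Rightarrow$\eqclbl{ii} and \eqclbl{i}$\Rightarrow$\eqclbl{iv}, the substantial implications, where I would lean on Crawley-Boevey \cite[thm.~(4.2)]{WCB94} and Krause \cite[prop.~3.11]{HK01}. For \eqclbl{i}$\Rightarrow$\eqclbl{ii} the point is the solution-set condition: one needs that every homomorphism from a fixed module $M$ into an object of $\varinjlim\B$ factors through an object of $\varinjlim\B$ of cardinality bounded by a cardinal depending only on $M$ and $R$; granting this, the product (taken inside $\varinjlim\B$ by \eqclbl{i}) over a representative set of such maps is a $\varinjlim\B$-preenvelope of $M$. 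Producing this bounded factorization is the main obstacle: pointwise one only gets a factorization through a small \emph{submodule}, not through a small object of $\varinjlim\B$, so one has to refine a $\varinjlim$-presentation to one indexed by a sufficiently filtered poset of bounded size and use presentability of $M$ — this is exactly the accessibility of $\Flat(\Bop,\Ab)$, and it is cleanest to cite \cite{WCB94}. For \eqclbl{i}$\Rightarrow$\eqclbl{iv} it remains, given closure under products and direct limits, to establish closure under pure submodules; I would argue in the functor category over $R\text{-}\mod$, where pure-exact sequences of left $R$-modules become exact and $\varinjlim\B$ is recognized via flat functors, so that closure of $\varinjlim\B$ under pure submodules reflects precisely the coherence-type property of $(\Bop,\Ab)$ equivalent to \eqclbl{i}; this too is contained in \cite[thm.~(4.2)]{WCB94}. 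Assembling the implications closes the cycle and proves the theorem.
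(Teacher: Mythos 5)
The paper itself offers no proof of this theorem: it is stated as a known result and attributed to Crawley-Boevey \cite[thm.~(4.2)]{WCB94} and Krause \cite[prop.~3.11]{HK01}, so there is no internal argument for you to match. Against that backdrop, your proposal is a reasonable and essentially correct expansion: you cite the same two references for the genuinely hard implications, \eqclbl{i}$\Rightarrow$\eqclbl{ii} (the solution-set/accessibility step) and \eqclbl{i}$\Rightarrow$\eqclbl{iv} (closure under pure submodules), and you supply short direct arguments for \eqclbl{iv}$\Rightarrow$\eqclbl{i}, \eqclbl{i}$\Leftrightarrow$\eqclbl{iii}, \eqclbl{ii}$\Rightarrow$\eqclbl{iii}, and \eqclbl{ii}$\Rightarrow$\eqclbl{i}. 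Those direct arguments check out. In particular, your use of the factorization characterization of $\varinjlim\B$ (every map from a finitely presented module into $F$ factors through $\add\,\B$) is legitimate after the reduction to $\add\,\B$; the paper itself relies on the closely allied Lenzing criterion \cite[prop.~2.1]{HL83} in the proof of Theorem D, and your \eqclbl{iii}$\Rightarrow$\eqclbl{i} argument that pushes all components of $X\to\prod F_\lambda$ through a single $\B$-preenvelope of $X$ is the standard way to see it. Likewise the summand trick for \eqclbl{ii}$\Rightarrow$\eqclbl{i} and the stage-factoring for \eqclbl{ii}$\Rightarrow$\eqclbl{iii} are correct.

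Two small points. First, in your \eqclbl{i}$\Rightarrow$\eqclbl{iii} argument the phrase ``$P_j\to P$ in turn factors through some $B'\in\B$'' is vacuous: a finitely presented stage of a $\varinjlim\B$-presentation already lies in $\B$ (up to your $\add$-closure), so you may simply take $B'=P_j$; the extra factoring step is harmless but redundant. Second, for the implication \eqclbl{i}$\Rightarrow$\eqclbl{iv} you correctly observe that only closure under pure submodules is missing, but the sentence gesturing at why coherence of $(\Bop,\Ab)$ delivers this would benefit from explicitly invoking the equivalence $\varinjlim\B\simeq\Flat(\Bop,\Ab)$ (Lenzing, and Proposition \eqref{prp:supp_converse}) together with the fact that pure-exact sequences in $\RMod$ become exact under the Yoneda embedding, since flat functors are closed under subobjects exactly when the functor category is ``coherent'' in the relevant sense; as written it reads more like an intention than an argument, though the citation to \cite{WCB94} does cover it. Overall the proposal is sound and supplies useful detail that the paper leaves to the references.
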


\begin{dfn}
  \label{dfn:Bcoherent}
  $R$ is called \textsl{$\B$-coherent} if the conditions in
  \eqref{thm:Bcoherent} are satisfied.
\end{dfn}

\begin{exa}
  \label{exa:coherent}
  The following conclusions hold.
  \begin{prt}
  \item If \mbox{$\B=R\text{-}\proj$} then
    \mbox{$\varinjlim\B=R\text{-}\Flat$} by Lazard \cite{DL64}, so by
    Chase \cite[thm.~2.1]{SUC60}, $R$ is $\B$-coherent if and only
    if it is right coherent in the classical sense.
  \item If \mbox{$\B=R\text{-}\mod$} then \mbox{$\varinjlim\B=\RMod$}
    by \cite[(7.15)]{CUJ-HL-book}, so all rings are
    $\B$-coherent.
  \end{prt}
\end{exa}

As an easy application of Theorem A, we now prove Theorem B.  
In view of Example \eqref{exa:ordinary_inj}, Theorem B implies the
existence of injective hulls and pure injective envelopes.  The first
of these classical results was proved by Eckmann and Schopf
\cite{BE-AS-53}, and the second one by Fuchs \cite{LF67} and
Kie{\l}pi{\'n}ski \cite{RK67}.

\begin{dfn}
  \label{dfn:essential}
  A homomorphism \mbox{$h \colon N \lora M$} of right $R$-modules is
  called a \textsl{$\B$-essential $\B$-monomorphism} if it is a
  $\B$-monomorphism in the sense of Definition \eqref{dfn:Bmono} and
  if any homomorphism \mbox{$g \colon M \lora L$} is a
  $\B$-monomorphism if \mbox{$g \circ h$} is so.
\end{dfn}


\begin{proof}[\textbf{Proof of Theorem B}]
  Since an envelope is unique up to isomorphism,
  cf.~\cite[prop.~1.2.1]{JX96}, it suffices to argue that every $N$
  admits a $\B$-essential $\B$-mono\-mor\-phism \mbox{$h \colon N
    \lora I$} with $I$ in $(\ModR)^\B$, and that every such map is a
  $(\ModR)^\B$-envelope.

  To this end, let \mbox{$u \colon (N\otimes_R-)|_\B \lora U$} be an
  injective hull in $(\B,\Ab)$, see \eqref{bfhpg:functor_categories}.
  By The\-o\-rem A, the functor $U$ has the form
  \mbox{$(I\otimes_R-)|_\B$} for an $I$ with cosupport in $\B$, and by
  Proposition \eqref{prp:formula}(d), $u$ is induced by a homomorphism
  \mbox{$h \colon N \lora I$}. It is easily seen that $h$ is a
  $\B$-essential $\B$-monomorphism.  Another application of Theorem A,
  combined with \cite[II.\S5, prop.~8]{PG62}, gives that if $h$ is a
  $\B$-essential $\B$-monomorphism then it is also a
  $(\ModR)^\B$-envelope.
\end{proof}

We are now ready to prove Theorem C, which characterizes when
$(\ModR)^\B$ is closed under coproducts. The hard parts of the proof
of Theorem C follow from references to works of Angeleri-H{\"u}gel,
Krause, Rada, and Saor{\'{\i}}n, \cite{LAH00}, \cite{HK01},
\cite{HK-MS-98}, \cite{JR-MS-98}.


\begin{proof}[\textbf{Proof of Theorem C}]
  It suffices to prove the implications:
  \begin{displaymath}
    \xymatrix@R=0ex@C=3ex{
    (iii) \ar@{=>}[r] & (i) \ar@{=>}[dd] & (ii) \ar@{=>}[l] & {} \\
    {} & {} & {} & (vi) \ar@{=>}[ul] \\
    (iv) \ar@{=>}[uu] & (vii) \ar@{=>}[l] \ar@{=>}[r] & (v) \ar@{=>}[ur]
    & {} \\
    }
  \end{displaymath}

  ``\mbox{$(i) \Rightarrow (vii)$}'': Note that
  \mbox{$(\ModR)^\B=\Prod\,J$}, where $J$ is $\prod_{\alpha \in
    A}\Hom_\ZZ(B_\alpha,\QZ)$ and $\{B_\alpha\}_{\alpha \in A}$ is a
  set of re\-pre\-sen\-tatives for the isomorphism classes in $\B$.
  By Definition \eqref{dfn:cosupport}, all modules in $(\ModR)^\B$ are
  pure injective. Hence $(i)$ implies that $J$ is
  $\Sigma$-pure-injective, and the proof of \cite[prop.~6.10]{LAH00} gives the desired
  conclusion.

  ``\mbox{$(vii) \Rightarrow (iv)$}'': If $(vii)$ holds then $E$ is
  product complete, cf.~\cite[\S3]{HK-MS-98}, and it follows by
  \cite[cor.~3.6]{HK-MS-98} that $(\ModR)^\B$ is closed under direct
  limits. By \cite[cor.~3.7(a)]{JR-MS-98}, the class $(\ModR)^\B$ is
  also precovering, and hence it is covering by
  \cite[thm.~2.2.8]{JX96}.
  
  ``\mbox{$(iv) \Rightarrow (iii) \Rightarrow (i)$}'': The first
  implicaton is trivial, and the latter is a consequence of
  \cite[thm.~3.4]{JR-MS-98} since $(\ModR)^\B$ is closed under direct
  summands.

  ``\mbox{$(vii) \Rightarrow (v)$}'': By \cite[thm.~6.7]{HK01}, the
  assumption $(vii)$ ensures that $(\ModR)^\B$ is definable, in
  particular, it is closed under pure submodules,
  cf.~\cite[thm.~2.1]{HK01}. 

  ``\mbox{$(v) \Rightarrow (vi)$}'': Let \mbox{$\eta =\,0 \to N' \to N
    \to N'' \to 0$} be pure exact. If $N'$ and $N''$ are in
  $(\ModR)^\B$ then, as $N'$ is pure injective, $\eta$ splits and
  \mbox{$N \cong N' \oplus N'' \in (\ModR)^\B$}. If $N$ is in
  $(\ModR)^\B$, the assumption $(v)$ gives that $N'$ is in
  $(\ModR)^\B$. As before, the sequence splits, and $N''$ is in
  $(\ModR)^\B$ since it is a direct summand of $N$.

  ``\mbox{$(vi) \Rightarrow (ii)$}'': Let \mbox{$\varphi_{\mu\lambda}
    \colon I_\lambda \lora I_\mu$} be a direct system of modules
  from $(\ModR)^\B$. As $\prod E_\lambda$ is in $(\ModR)^\B$,
  as \mbox{$\coprod E_\lambda \lora \prod E_\lambda$} is a pure
  monomorphism, and since \mbox{$\coprod E_\lambda \lora \varinjlim
    E_\lambda$} is a pure epimorphism, we conclude that $\varinjlim
  E_\lambda$ is in $(\ModR)^\B$.

  ``\mbox{$(ii) \Rightarrow (i)$}'': A coproduct is the direct limit
  of its finite sub-coproducts.
\end{proof}

\begin{dfn}
  \label{dfn:Bnoetherian}
  $R$ is \textsl{$\B$-noetherian} if the conditions in Theorem
  C are satisfied.
\end{dfn}

\begin{exa}
  The following conclusions hold.
  \begin{prt}
  \item If \mbox{$\B=R\text{-}\proj$} then
    \mbox{$(\ModR)^\B=\Inj\text{-}R$} cf.~\eqref{exa:ordinary_inj}(a),
    so by Bass \cite[thm.~1.1]{HB62}, $R$ is $\B$-noetherian if
    and only if it is right noetherian in the classical sense.
  \item If \mbox{$\B=R\text{-}\mod$} then
    \mbox{$(\ModR)^\B=\PureInj\text{-}R$}
    cf.~\eqref{exa:ordinary_inj}(b), so by
    \cite[thm.~B.18]{CUJ-HL-book}, $R$ is $\B$-noetherian if and only
    if it is right pure semi-simple.
  \end{prt}
\end{exa}

\begin{cor}
  Assume that \mbox{$\B \subseteq \B'$} are two additive full
  subcategories of $R\text{-}\mod$. If the ring $R$ is
  $\B'$-noetherian then it is also $\B$-noetherian.
\end{cor}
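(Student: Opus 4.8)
The plan is to exploit the characterization of $\B$-noetherianness via condition \eqref{eqc:coproducts} of Theorem~C: namely, that $(\ModR)^\B$ is closed under coproducts in $\ModR$. Since $\B \subseteq \B'$, Definition \eqref{dfn:cosupport} immediately gives a containment relation between the two cosupport classes — indeed $\{\Hom_\ZZ(B,\QZ) \mid B \in \B\} \subseteq \{\Hom_\ZZ(B',\QZ) \mid B' \in \B'\}$, so
\begin{displaymath}
  (\ModR)^\B = \Prod\{\Hom_\ZZ(B,\QZ) \mid B \in \B\} \subseteq \Prod\{\Hom_\ZZ(B',\QZ) \mid B' \in \B'\} = (\ModR)^{\B'}.
\end{displaymath}
So the two classes are not equal in general, and one cannot simply transport the coproduct-closure of $(\ModR)^{\B'}$ to $(\ModR)^\B$ directly. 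Instead I would work through the tensor embedding and Theorem~A.

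First I would fix a set $\{I_\lambda\}_{\lambda \in L}$ of modules in $(\ModR)^\B$ and argue that $\coprod_\lambda I_\lambda \in (\ModR)^\B$. By Theorem~A applied to $\B$, each functor $(I_\lambda \otimes_R -)|_\B$ is injective in $(\B,\Ab)$. The key point is that each $I_\lambda$, having cosupport in $\B \subseteq \B'$, also has cosupport in $\B'$, so by Theorem~A applied to $\B'$ each $(I_\lambda \otimes_R -)|_{\B'}$ is injective in $(\B',\Ab)$. Since $R$ is $\B'$-noetherian, Theorem~C$(vii)$ for $\B'$ gives $(\ModR)^{\B'} = \Add\, E'$ for some $E'$; in particular $(\ModR)^{\B'}$ is closed under coproducts, so $N := \coprod_\lambda I_\lambda \in (\ModR)^{\B'}$. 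Now I would invoke Proposition \eqref{prp:cosupp_converse}: since $N \in (\ModR)^{\B'} \subseteq \ModR$ and we want to conclude $N \in (\ModR)^\B$, it suffices to show $(N \otimes_R -)|_\B \in \Inj(\B,\Ab)$ together with the existence of a $(-\otimes_R\B)$-isomorphism onto an object of $(\ModR)^\B$.

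The cleanest route to injectivity of $(N \otimes_R -)|_\B$: by Proposition \eqref{prp:formula}(a) the tensor embedding commutes with small filtered colimits, hence with coproducts (a coproduct is a filtered colimit of finite sub-coproducts, and finite coproducts are preserved by additivity), so $(N \otimes_R -)|_\B \simeq \varinjlim_{L' \subseteq L \text{ finite}} \big(\bigoplus_{\lambda \in L'} I_\lambda \otimes_R -\big)|_\B$, a filtered colimit of injective objects in $(\B,\Ab)$. Here is where $\B$-noetherianness of $R$ itself would enter: by Theorem~C$(ii)$ for $\B$, the class $(\ModR)^\B$ — and correspondingly, via Theorem~A, the class $\Inj(\B,\Ab)$ — is closed under direct limits; actually one shows $\Inj(\B,\Ab)$ is closed under filtered colimits precisely because each finite partial coproduct $\bigoplus_{\lambda\in L'} I_\lambda$ lies in $(\ModR)^\B$ and the tensor embedding is exact on $(\ModR)^\B$ and commutes with filtered colimits. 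Once $(N \otimes_R -)|_\B \in \Inj(\B,\Ab)$, Proposition \eqref{prp:cosupp_converse} yields $I \in (\ModR)^\B$ and a $(-\otimes_R\B)$-isomorphism $N \to I$; since $N$ is pure injective (being in $(\ModR)^{\B'}$), that $\B$-isomorphism is a split monomorphism by Lemma \eqref{lem:Bsplit} (or one argues $N$ is a direct summand of $I$), and $(\ModR)^\B$ is closed under direct summands, so $N \in (\ModR)^\B$.

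The main obstacle I anticipate is circularity in the last paragraph: invoking closure of $(\ModR)^\B$ under direct limits (which is one of the Theorem~C conditions, hence available since $R$ is $\B$-noetherian — but wait, $\B$-noetherian is what we are trying to *prove*). So the argument must be restructured so that $\B$-noetherianness of $R$ is *not* assumed. The correct fix is to bypass direct-limit closure entirely: show $(N\otimes_R-)|_\B$ is injective in $(\B,\Ab)$ by a direct argument using only that $N \in (\ModR)^{\B'}$ and $\B \subseteq \B'$ — concretely, the restriction functor $(\B',\Ab) \to (\B,\Ab)$ (restriction along the inclusion $\B \hookrightarrow \B'$) has an exact left adjoint (left Kan extension along a full embedding of skeletally small additive categories, which is exact because $\B$ is a full subcategory), hence preserves injectives; applying it to the injective object $(N \otimes_R -)|_{\B'}$ gives that $(N\otimes_R-)|_\B$ is injective in $(\B,\Ab)$. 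Then Proposition \eqref{prp:cosupp_converse} and the pure-injectivity/splitting argument finish the proof, with no appeal to Theorem~C for $\B$. Verifying that this restriction functor preserves injectives — i.e. that the Kan-extension left adjoint is exact — is the one genuinely technical point, and it is standard (see \cite[II.\S1]{PG62}).
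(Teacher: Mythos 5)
Your proposal correctly establishes the containment $(\ModR)^\B \subseteq (\ModR)^{\B'}$, correctly observes that $N := \coprod E_\lambda$ lands in $(\ModR)^{\B'}$ and is therefore pure injective, and correctly diagnoses the circularity in your first attempt (appealing to closure of $(\ModR)^\B$ under direct limits, which is what you are trying to prove). But your proposed fix is false. The restriction functor $(\B',\Ab) \to (\B,\Ab)$ does \emph{not} preserve injectives in general, because left Kan extension along a fully faithful inclusion of additive categories is not exact: the comma categories $i/B'$ need not be filtered (coequalizers in $\B$ may fail to exist or may leave $\B$). A concrete counterexample to your claim that restriction preserves injectives: take $R = \ZZ$, $\B = \add\,\ZZ$, $\B' = \ZZ\text{-}\mod$. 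Under the evaluation-at-$\ZZ$ equivalence $(\B,\Ab)\simeq \Ab$, injective objects are the divisible groups. The $p$-adic integers $\ZZ_p$ are pure injective, so by Theorem~A the functor $(\ZZ_p \otimes_\ZZ -)|_{\B'}$ is injective in $(\B',\Ab)$; but its restriction to $\B$ corresponds to $\ZZ_p \in \Ab$, which is not divisible, hence not injective. So the argument breaks down precisely at the point you flag as ``the one genuinely technical point.''

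The whole detour through Theorem~A, Proposition \eqref{prp:cosupp_converse} and $(-\otimes_R\B)$-isomorphisms is unnecessary. The missing observation is elementary: $(\ModR)^\B = \Prod\{\Hom_\ZZ(B,\QZ) \mid B\in\B\}$ is closed under products and direct summands \emph{by definition}, so $\prod E_\lambda \in (\ModR)^\B$ with no work at all. Since $\coprod E_\lambda$ is pure injective (as you showed) and the canonical map $\coprod E_\lambda \to \prod E_\lambda$ is a pure monomorphism, it splits; thus $\coprod E_\lambda$ is a direct summand of $\prod E_\lambda \in (\ModR)^\B$, hence itself in $(\ModR)^\B$, and Theorem~C$(i)$ gives the conclusion. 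This is the paper's argument, and it avoids the functor-category machinery entirely.
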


\begin{proof}
  Assume that $R$ is $\B'$-noetherian and let $\{E_\lambda\}$ be a
  family in $(\ModR)^\B$. By our asumptions, \mbox{$(\ModR)^\B
    \subseteq (\ModR)^{\B'}$}, and the latter is closed under
  coproducts. It follows that $\coprod E_\lambda$ belongs to
  $(\ModR)^{\B'}$ and, in particular, $\coprod E_\lambda$ is pure
  injective. Thus, the pure monomorphism \mbox{$\coprod E_\lambda
    \lora \prod E_\lambda$} is split, and since $\prod E_\lambda$
  belongs to $(\ModR)^\B$ then so does $\coprod E_\lambda$. Thus $R$
  is $\B$-noetherian by Theorem C.
\end{proof}

It is natural to ask if there exists a cotorsion pair
$(\M,(\ModR)^\B)$ of finite type?

\begin{exa}
  The following is well-known.
  \begin{prt}
  \item If \mbox{$\B=R\text{-}\proj$} then
    \mbox{$(\ModR)^\B=\Inj\text{-}R$},
    cf.~\eqref{exa:ordinary_inj}(a). Clearly $(\ModR,\Inj\text{-}R)$
    is a cotorsion pair which is of finite type if $R$ is right
    noetherian.
  \item If \mbox{$\B=R\text{-}\mod$} then
    \mbox{$(\ModR)^\B=\PureInj\text{-}R$},
    cf.~\eqref{exa:ordinary_inj}(b). In general, there does not exist
    a cotorsion pair of the form $(\M,\PureInj\text{-}R)$.
  \end{prt}
\end{exa}

Our proof of the following result uses Theorems D and E which are
proved in the next section. However, Proposition \eqref{prp:cotorsion}
itself naturally belongs in this section.

\begin{prp}
  \label{prp:cotorsion}
  Assume that $R$ is right coherent. Then there exists a cotorsion
  pair of finite type $(\M,(\ModR)^\B)$ if and only if $\B$ satisfies
  the following conditions:
  \begin{rqm}
  \item $R$ belongs to $\varinjlim\B$;
  \item $R$ is $\B$-noetherian;
  \item If $\,\Tor_1^R(M,F)=0$ for all $M \in \mod\text{-}R$ with
    $\Tor_1^R(M,\B)=0$ then $F \in \varinjlim\B$.
  \end{rqm}
\end{prp}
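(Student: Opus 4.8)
The plan is to pin down the generating set of the cotorsion pair explicitly and then read off both implications from the Pontryagin‑duality formulas for $\Ext$ and $\Tor$ together with the stability Theorems~D and~E. Recall that $(\M,(\ModR)^\B)$ being of finite type means $(\ModR)^\B=\mathcal{S}^\perp$, that is $\Ext^1_R(M,I)=0$ for all $M\in\mathcal{S}$, for some set $\mathcal{S}$ of finitely presented right $R$‑modules; since $R$ is right coherent every such $M$ has all syzygies finitely presented, hence $\Ext^1_R(M,-)$ commutes with coproducts. I will use two natural isomorphisms: $\Ext^1_R(M,\Hom_\ZZ(N,\QZ))\cong\Hom_\ZZ(\Tor_1^R(M,N),\QZ)$ for an arbitrary right module $M$ and left module $N$, and, for $M$ finitely presented over the right coherent ring $R$, $\Hom_\ZZ(\Ext^1_R(M,I),\QZ)\cong\Tor_1^R(M,\Hom_\ZZ(I,\QZ))$; the latter is obtained by applying the exact functor $\Hom_\ZZ(-,\QZ)$ to $\Hom_R(P_\bullet,I)$ for a resolution $P_\bullet\to M$ by finitely generated projectives and using $\Hom_\ZZ(\Hom_R(P,I),\QZ)\cong P\otimes_R\Hom_\ZZ(I,\QZ)$. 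The candidate generating set is (a set of representatives for) the finitely presented right $R$‑modules $M$ with $\Tor_1^R(M,\B)=0$.

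For the ``if'' direction I would assume (1)--(3) and verify that this set $\mathcal{S}$ satisfies $\mathcal{S}^\perp=(\ModR)^\B$. The inclusion $(\ModR)^\B\subseteq\mathcal{S}^\perp$ is immediate: for $B\in\B$ and $M\in\mathcal{S}$ we have $\Ext^1_R(M,\Hom_\ZZ(B,\QZ))\cong\Hom_\ZZ(\Tor_1^R(M,B),\QZ)=0$, and $\Ext^1_R(M,-)$ commutes with products and direct summands, so it annihilates every object of $\Prod\{\Hom_\ZZ(B,\QZ)\mid B\in\B\}$. For the reverse inclusion, let $I\in\mathcal{S}^\perp$; then $\Tor_1^R(M,\Hom_\ZZ(I,\QZ))\cong\Hom_\ZZ(\Ext^1_R(M,I),\QZ)=0$ for every $M\in\mathcal{S}$, i.e. for every finitely presented $M$ with $\Tor_1^R(M,\B)=0$, so condition~(3) gives $\Hom_\ZZ(I,\QZ)\in\varinjlim\B$. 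Since $R\in\varinjlim\B$ by~(1) and $R$ is $\B$‑noetherian by~(2), Theorem~E applies and its conclusion forces $I\in(\ModR)^\B$. Hence $\mathcal{S}^\perp=(\ModR)^\B$, and the cotorsion pair $({}^{\perp}(\mathcal{S}^\perp),\mathcal{S}^\perp)$ cogenerated by the set $\mathcal{S}\subseteq\mod\text{-}R$ of finitely presented modules is then of finite type and has the form $(\M,(\ModR)^\B)$.

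For the ``only if'' direction I would start from a cotorsion pair of finite type, say $(\ModR)^\B=\mathcal{S}^\perp$ with $\mathcal{S}$ a set of finitely presented right $R$‑modules. Since each $M\in\mathcal{S}$ is finitely presented over the right coherent ring $R$, $\Ext^1_R(M,-)$ commutes with coproducts, so $(\ModR)^\B=\mathcal{S}^\perp$ is closed under coproducts and $R$ is $\B$‑noetherian by Theorem~C; this is~(2). Because $\Hom_\ZZ(B,\QZ)\in\mathcal{S}^\perp$ for all $B\in\B$, the first formula above gives $\Tor_1^R(M,B)=0$ for every $M\in\mathcal{S}$ and $B\in\B$. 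Consequently, if $F$ is a left $R$‑module with $\Tor_1^R(M,F)=0$ for all finitely presented right modules $M$ with $\Tor_1^R(M,\B)=0$, then in particular $\Ext^1_R(M,\Hom_\ZZ(F,\QZ))\cong\Hom_\ZZ(\Tor_1^R(M,F),\QZ)=0$ for all $M\in\mathcal{S}$, i.e. $\Hom_\ZZ(F,\QZ)\in\mathcal{S}^\perp=(\ModR)^\B$, so $F\in\varinjlim\B$ by Theorem~D; this is~(3). Finally~(1) is the case of~(3) with $F={}_{R}R$, since $\Tor_1^R(M,{}_{R}R)=0$ always.

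The routine ingredients are the two character‑module isomorphisms (where one must track left/right carefully and note where finite presentation and right coherence are used), the fact that over a right coherent ring the generators of a cotorsion pair of finite type are modules over which $\Ext^1_R(M,-)$ commutes with coproducts, and the well‑known fact that $({}^{\perp}(\mathcal{S}^\perp),\mathcal{S}^\perp)$ is always a cotorsion pair. The real content, and the step I expect to be the main obstacle, is recognizing that $\{M\in\mod\text{-}R\mid\Tor_1^R(M,\B)=0\}$ is the correct candidate generating set and that Theorems~D and~E are exactly the two bridges carrying the $\Tor$‑vanishing condition defining $\mathcal{S}^\perp$ into, and out of, membership in $(\ModR)^\B$.
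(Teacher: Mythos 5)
Your proof is correct and follows essentially the same strategy as the paper's: identify the candidate generating set of finitely presented modules via the character-module isomorphism $\Ext^1_{\Rop}(-,\Hom_\ZZ(B,\QZ))\simeq\Hom_\ZZ(\Tor_1^R(-,B),\QZ)$, use the coherence isomorphism $\Tor_1^R(M,\Hom_\ZZ(E,\QZ))\cong\Hom_\ZZ(\Ext^1_{\Rop}(M,E),\QZ)$ for finitely presented $M$, and bridge with Theorems~D and~E. You also correctly invoke Theorem~E (not Theorem~D, as the paper's text appears to say in a likely misprint) to pass from $\Hom_\ZZ(I,\QZ)\in\varinjlim\B$ to $I\in(\ModR)^\B$ in the ``if'' direction.

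There are two places where your argument is genuinely a bit leaner than the paper's. For condition~(3) in the ``only if'' direction, the paper first establishes $\varinjlim(\M\cap\mod\text{-}R)=\M$ by citing Angeleri-H\"ugel--Trlifaj and then deduces $\Tor_1^R(\M,F)=0$ before applying the cotorsion pair; you instead observe directly that any fixed generating set $\mathcal{S}$ for the pair sits inside $\{M\in\mod\text{-}R\mid\Tor_1^R(M,\B)=0\}$, so the $\Tor$-vanishing hypothesis of~(3) already implies $\Tor_1^R(\mathcal{S},F)=0$, hence $\Hom_\ZZ(F,\QZ)\in\mathcal{S}^\perp=(\ModR)^\B$, with no need for the direct-limit closure of $\M$. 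For condition~(1), the paper notes that the injective module $\Hom_\ZZ(R,\QZ)$ lies in the right-hand class of any cotorsion pair and then applies Theorem~D, whereas you simply specialize your already-proved~(3) to $F={}_RR$; both are valid, yours saving one fact. These are streamlinings rather than a different method, and everything you wrote checks out.
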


\begin{proof}
  ``If'': First assume that (1)--(3) hold. By the isomorphism
  \cite[VI.\S5]{CE56},
  \begin{displaymath}
    \tag{\text{$\dagger$}}
    \Ext^1_{\Rop}(-,\Hom_\ZZ(B,\QZ)) \simeq \Hom_\ZZ(\Tor_1^R(-,B),\QZ),
  \end{displaymath}
  it follows that \mbox{$\M := \Ker\Ext_{\Rop}^1(-,(\ModR)^\B)=
    \Ker\Tor_1^R(-,\B)$}.  To prove that $(\M,(\ModR)^\B)$ is a
  cotorsion pair of finite type, we show that $E$ is in $(\ModR)^\B$
  if \mbox{$\Ext_{\Rop}^1(\M \cap \mod\text{-}R,E)=0$}. As $R$ is
  right coherent, \cite[prop.~VI.5.3]{CE56} gives that
  \begin{displaymath}
    \tag{\text{$\ddagger$}}
    \Tor_1^R(M,\Hom_\ZZ(E,\QZ)) \cong \Hom_\ZZ(\Ext_{\Rop}^1(M,E),\QZ)
  \end{displaymath}
  for $M$ in $\mod\text{-}R$. In light of $(\ddagger)$ and (3),
  \mbox{$\Ext_{\Rop}^1(\M \cap \mod\text{-}R,E)=0$} implies that
  $\Hom_\ZZ(E,\QZ)$ is in $\varinjlim\B$; so (1), (2), and Theorem D
  gives that $E$ is in $(\ModR)^\B$.

  ``Only if'': Assume that $(\M,(\ModR)^\B)$ is a cotorsion pair of
  finite type. Since $\Hom_\ZZ(R,\QZ)$ is injective, it belongs to
  $(\ModR)^\B$, and thus Theorem D gives (1). If
  $(\mathcal{F},\mathcal{G})$ is any cotorsion pair of finite type in
  $\ModR$ over a right coherent ring then $\mathcal{G}$ is closed
  under coproducts; this proves (2). We have \mbox{$\M =
    \Ker\Tor_1^R(-,\B)$} by $(\dagger)$, and hence
  \cite[thm.~2.3]{LAH-JT-04} implies that \mbox{$\varinjlim(\M \cap
    \mod\text{-}R)=\M$}. To prove (3) we assume that
  \mbox{$\Tor_1^R(\M \cap \mod\text{-}R,F)=0$}. By the preceding
  remark, it follows that \mbox{$\Tor_1^R(\M,F)=0$}, and combining
  this with the isomorphism,
  \begin{displaymath}
    \Ext^1_{\Rop}(M,\Hom_\ZZ(F,\QZ)) \cong \Hom_\ZZ(\Tor_1^R(M,F),\QZ),
  \end{displaymath}
  and the fact that $(\M,(\ModR)^\B)$ is a cotorsion pair, we conclude
  that $\Hom_\ZZ(F,\QZ)$ is in $(\ModR)^\B$. By Theorem D it follows
  that $F$ is in $\varinjlim\B$.
\end{proof}

\section{Stability results} \label{sec:stability}

In this section we prove a number of stability results for modules
with (co)support in $\B$, and we also present some applications. The
terminology in Definitions \eqref{dfn:Bcoherent} and
\eqref{dfn:Bnoetherian} play a central role in this section.

\begin{bfhpg}[Injective structures]
  \label{bfhpg:injective_structures}
  Maranda \cite{JMM64} defines an \textsl{injective structure} as a
  pair $(\mathcal{H},\mathcal{Q})$ where $\mathcal{H}$ is a class of
  homomorphisms and $\mathcal{Q}$ is a class of modules satisfying:
  \begin{rqm}
  \item $Q \in \mathcal{Q}$ if and only if $\Hom_R(h,Q)$ is surjective
    for all $h \in \mathcal{H}$;
  \item $h \in \mathcal{H}$ if and only if $\Hom_R(h,Q)$ is surjective
    for all $Q \in \mathcal{Q}$;
  \item For every $R$-module $M$ there exists $h \colon M \lora Q$
    where $h \in \mathcal{H}$ and $Q \in \mathcal{Q}$.
  \end{rqm}
  Given (2), condition (3) means exactly that $\mathcal{Q}$ is
  preenveloping in $\ModR$.

  Enochs-Jenda-Xu \cite[thm.~2.1]{EEE-OMGJ-JX-93} prove that if
  $\mathcal{H}$ is the class of $\B$-monomorphisms, cf.~Definition
  \eqref{dfn:Bmono}, then $(\mathcal{H},(\ModR)^\B)$ is an injective
  structure, and $(\ModR)^\B$ is enveloping (not just preenveoping).
  The last fact also follows from Theorem B.
\end{bfhpg}

\begin{lem}
  \label{lem:exact-exact}
  Let $\xi$ be a complex of left $R$-modules, and let $\eta$ be a
  complex of right $R$-modules. Then the following conclusions hold:
  \begin{prt}
  \item $\xi$ is $\Hom_R(\B,-)$-exact if and only if
    $\,\Hom_{\ZZ}(\xi,\QZ)$ is $(-\otimes_R\B)$-exact.
  \item $\eta$ is $(-\otimes_R\B)$-exact if and only if
    $\,\Hom_{\ZZ}(\eta,\QZ)$ is $\Hom_R(\B,-)$-exact.
  \end{prt}
\end{lem}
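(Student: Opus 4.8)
The plan is to deduce both equivalences from a single fact about $\QZ$: it is an injective cogenerator of $\Ab$, so that a complex $C$ of abelian groups is exact if and only if its Pontryagin dual $\Hom_\ZZ(C,\QZ)$ is exact. (Injectivity of $\QZ$ makes $\Hom_\ZZ(-,\QZ)$ exact, which gives one implication; faithfulness of the cogenerator gives the other.) Everything else is a pair of natural isomorphisms, applied termwise so as to turn $\xi$, resp.\ $\eta$, into the relevant complexes of abelian groups.

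For part (b) --- the easier half, which needs no finiteness hypothesis --- I would use the tensor--hom adjunction \cite[prop.~II.5.2]{CE56}: for every left $R$-module $B$ there is an isomorphism of complexes of abelian groups $\Hom_\ZZ(\eta\otimes_R B,\QZ)\cong\Hom_R(B,\Hom_\ZZ(\eta,\QZ))$, natural in $\eta$. Then one reads off the chain: $\eta$ is $(-\otimes_R\B)$-exact $\Leftrightarrow$ $\eta\otimes_R B$ is exact for all $B\in\B$ $\Leftrightarrow$ $\Hom_\ZZ(\eta\otimes_R B,\QZ)$ is exact for all $B\in\B$ $\Leftrightarrow$ $\Hom_R(B,\Hom_\ZZ(\eta,\QZ))$ is exact for all $B\in\B$ $\Leftrightarrow$ $\Hom_\ZZ(\eta,\QZ)$ is $\Hom_R(\B,-)$-exact, the middle equivalence being the injective-cogenerator property.

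For part (a) the relevant isomorphism is the one that uses that every object of $\B$ is finitely presented (Setup \eqref{bfhpg:setup}): for $B\in\B$ and any left $R$-module $M$ the canonical map $\Hom_\ZZ(M,\QZ)\otimes_R B\to\Hom_\ZZ(\Hom_R(B,M),\QZ)$, $f\otimes b\mapsto(\varphi\mapsto f(\varphi(b)))$, is an isomorphism (this is \cite[prop.~VI.5.3]{CE56}; alternatively, it is clearly an isomorphism for finitely generated free $B$, both sides are additive right exact functors of $B$ --- here using that $\Hom_\ZZ(-,\QZ)$ is exact --- so the five lemma applied to a finite presentation of $B$ gives the general case). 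This map is natural in $M$, hence yields an isomorphism of complexes $\Hom_\ZZ(\xi,\QZ)\otimes_R B\cong\Hom_\ZZ(\Hom_R(B,\xi),\QZ)$. Now the analogous chain finishes it: $\xi$ is $\Hom_R(\B,-)$-exact $\Leftrightarrow$ $\Hom_R(B,\xi)$ is exact for all $B\in\B$ $\Leftrightarrow$ $\Hom_\ZZ(\Hom_R(B,\xi),\QZ)$ is exact for all $B\in\B$ $\Leftrightarrow$ $\Hom_\ZZ(\xi,\QZ)\otimes_R B$ is exact for all $B\in\B$ $\Leftrightarrow$ $\Hom_\ZZ(\xi,\QZ)$ is $(-\otimes_R\B)$-exact.

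The argument is purely formal, so I do not expect a genuine obstacle. The two things to be careful about are that each of the two isomorphisms above is natural in the module variable, so that it commutes with the differentials and passes to complexes; and the bookkeeping observation that the finite presentation of the objects of $\B$ is exactly what part (a) requires, whereas part (b) holds without it.
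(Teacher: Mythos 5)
Your proof is correct and is essentially the same as the paper's: both invoke the two Cartan--Eilenberg isomorphisms (\cite[prop.~VI.5.3]{CE56} for part (a), using that $\B$ consists of finitely presented modules, and \cite[prop.~II.5.2]{CE56} for part (b)) and then conclude via the injective-cogenerator property of $\QZ$. You have simply spelled out the chain of equivalences that the paper leaves to the reader with ``from these the lemma easily follows.''
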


\begin{proof}
  For a finitely presented left $R$-module $B$, there are natural
  isomorphisms,
  \begin{align*}
    \Hom_{\ZZ}(\Hom_R(B,\xi),\QZ)  &\cong \Hom_{\ZZ}(\xi,\QZ)\otimes_RB, 
    \\
    \Hom_{\ZZ}(\eta\otimes_RB,\QZ) &\cong \Hom_R(B,\Hom_{\ZZ}(\eta,\QZ)),
  \end{align*}
  see \cite[prop.~VI.5.3]{CE56} and \cite[prop.~II.5.2]{CE56}.
  From these the lemma easily follows.
\end{proof}


\enlargethispage{2ex}

\begin{proof}[\textbf{Proof of Theorem D}]
  ``Only if'': If \mbox{$F \in \varinjlim\B$} and $h$ is a
  $\B$-monomorphism, then \mbox{$h \otimes_RF$} is injective since
  $\otimes$ commutes with $\varinjlim$ by
  \cite[cor.~2.6.17]{CAW-book}.  Thus
  \begin{displaymath}
    \Hom_{\Rop}(h,\Hom_{\ZZ}(F,\QZ)) \cong
    \Hom_{\ZZ}(h\otimes_RF,\QZ) 
  \end{displaymath}
  is surjective, and it follows from
  \eqref{bfhpg:injective_structures} that $\Hom_{\ZZ}(F,\QZ)$ belongs
  to $(\ModR)^\B$.

  ``If'': By \cite[cor.~3.7(a)]{JR-MS-98} the class of modules
  consisting of coproducts of modules from $\B$ is precovering. Hence
  there is a left-exact and $\Hom_R(\B,-)$-exact sequence,
  \begin{displaymath}
    \xi \, = \, \ 0 \lora K \lora P \stackrel{\pi}{\lora} F \lora 0,
  \end{displaymath}
  where $P$ is a set-indexed coproduct of modules from $\B$.  A priori
  we do not know if $\xi$ is exact at $F$, but we will argue that
  $\xi$ is, in fact, pure exact. Having showed this, it will follow
  from \cite[prop.~2.1]{HL83} that $F$ belongs to $\varinjlim\B$, as
  desired.

  Exactness and pure exactness of $\xi$ can be proved simultaneously
  by showing that $\Hom_\ZZ(\pi,\QZ)$ is a split monomorphism,
  cf.~\cite[thm.~6.4]{CUJ-HL-book}. As $\Hom_{\ZZ}(F,\QZ)$ is in
  $(\ModR)^\B$, it suffices by Lemma \eqref{lem:Bsplit} to see that
  $\Hom_\ZZ(\pi,\QZ)$ is a $\B$-monomor\-phism, but this follows from
  Lemma \eqref{lem:exact-exact}(a) and $\Hom_R(\B,-)$-surjectivity of
  $\pi$.
\end{proof}

\begin{cor}
  \label{cor:cap}
  Assume that $\E$ is a class of right $R$-modules that is closed
  under direct summands and products in $\ModR$ and satisfies the
  conditions:
  \begin{rqm}
  \item $\Hom_\ZZ(B,\QZ)$ belongs to $\E$ for every $B \in \B$;
  \item $\Hom_\ZZ(E,\QZ)$ belongs to $\varinjlim\B$ for every $E \in
    \E$.
  \end{rqm}
  Then there is an equality, $(\ModR)^\B = \E \cap \PureInj\text{-}R$.
\end{cor}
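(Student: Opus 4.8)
The plan is to establish the two inclusions making up the claimed equality separately.

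For the inclusion $(\ModR)^\B \subseteq \E \cap \PureInj\text{-}R$, I would argue as follows. By Definition \eqref{dfn:cosupport} one has $(\ModR)^\B = \Prod\{\Hom_\ZZ(B,\QZ) \mid B \in \B\}$. Condition (1) puts each generator $\Hom_\ZZ(B,\QZ)$ into $\E$, and since $\E$ is closed under products and direct summands, it follows at once that $(\ModR)^\B \subseteq \E$. For the pure injective half, since $\B \subseteq R\text{-}\mod$ the set of generators of $(\ModR)^\B$ is contained in that of $(\ModR)^{R\text{-}\mod}$, whence $(\ModR)^\B \subseteq (\ModR)^{R\text{-}\mod} = \PureInj\text{-}R$ by Example \eqref{exa:ordinary_inj}(b) (alternatively, one invokes directly that character modules are pure injective and that $\PureInj\text{-}R$ is closed under products and direct summands). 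Thus $(\ModR)^\B \subseteq \E \cap \PureInj\text{-}R$.

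For the reverse inclusion $\E \cap \PureInj\text{-}R \subseteq (\ModR)^\B$, let $E$ be a module in $\E \cap \PureInj\text{-}R$. The evaluation homomorphism $E \lora \Hom_\ZZ(\Hom_\ZZ(E,\QZ),\QZ)$ is a pure monomorphism, and since $E$ is pure injective it splits; hence $E$ is a direct summand of its double Pontryagin dual. Now condition (2) gives that $\Hom_\ZZ(E,\QZ)$ lies in $\varinjlim\B$, so Theorem D applied to $F := \Hom_\ZZ(E,\QZ)$ shows that $\Hom_\ZZ(F,\QZ) = \Hom_\ZZ(\Hom_\ZZ(E,\QZ),\QZ)$ belongs to $(\ModR)^\B$. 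Since $(\ModR)^\B$ is a $\Prod$-class, it is closed under direct summands, so $E \in (\ModR)^\B$, as required.

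I do not expect a genuine obstacle here once Theorem D is in hand: the argument rests only on the closure properties of $\E$ and of $(\ModR)^\B$, on the standard fact that a module is pure injective exactly when the canonical pure monomorphism into its double character module splits, and on Theorem D to turn hypothesis (2) into membership of the double dual in $(\ModR)^\B$. The single point deserving attention is the bookkeeping: condition (1) feeds the inclusion $(\ModR)^\B \subseteq \E$, condition (2) feeds $\E \cap \PureInj\text{-}R \subseteq (\ModR)^\B$, and the pure injectivity hypothesis on $E$ is used precisely to split the map $E \lora \Hom_\ZZ(\Hom_\ZZ(E,\QZ),\QZ)$.
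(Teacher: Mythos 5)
Your proof is correct and follows essentially the same route as the paper: the inclusion $(\ModR)^\B \subseteq \E \cap \PureInj\text{-}R$ comes from (1) together with the closure properties, and the reverse inclusion is obtained exactly as in the paper, using (2) and Theorem~D to place the double character module $\Hom_\ZZ(\Hom_\ZZ(E,\QZ),\QZ)$ in $(\ModR)^\B$ and then splitting the canonical pure monomorphism via pure injectivity of $E$. The only difference is that you spell out the first inclusion in more detail than the paper's terse ``clear from (1).''
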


\begin{proof}
  The inclusion ``$\subseteq$'' is clear from (1).  To prove
  ``$\supseteq$'' we assume that \mbox{$E \in \E$} is pure injective.
  As $E$ is in $\E$, it follows by (2) and Theorem D that the module
  $D(E)$ defined by $\Hom_\ZZ(\Hom_\ZZ(E,\QZ),\QZ)$ belongs to
  $(\ModR)^\B$.  As the canonical homomorphism \mbox{$E \lora D(E)$}
  is a pure monomorphism, and since $E$ is pure injective, $E$ is a
  direct summand of $D(E)$. Consequently, $E$ belongs to $(\ModR)^\B$.
\end{proof}

Applying \eqref{lem:Bsplit}, \eqref{bfhpg:injective_structures},
\eqref{lem:exact-exact}, and Theorem D, it is easy to prove the
following properties for modules with support in $\B$, akin to those
found in Lenzing \cite[\S2]{HL83}.

\begin{cor}
  \label{cor:Lenzing-like}
  The following conclusions hold:
  \begin{prt}
  \item A left $R$-module $F$ belongs to $\varinjlim\B$ if and only if
    $h \otimes_RF$ is a monomorphism for every $\B$-monomorphism $h$.
  \item If\, \mbox{$0 \to F' \to F \to F'' \to 0$} is an exact and
    $\,\Hom_R(\B,-)$-exact sequence with $F''$ in $\varinjlim\B$, then
    $F'$ is in $\varinjlim\B$ if and only if $F$ is in $\varinjlim\B$.
    \qed
  \end{prt}
\end{cor}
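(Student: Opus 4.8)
The plan is to prove the Corollary by combining the ``duality'' principle encoded in Lemma~\eqref{lem:exact-exact} with Theorem~D. For part~(a), I would argue the two implications separately. The forward direction is immediate: this is essentially the ``only if'' half of the argument used in \eqref{bfhpg:injective_structures}, since if $F \in \varinjlim\B$ and $h$ is a $\B$-monomorphism then $h \otimes_R F$ is a monomorphism because $\otimes$ commutes with filtered colimits by \cite[cor.~2.6.17]{CAW-book} and each $h \otimes_R B$ is injective. For the converse, suppose $h \otimes_R F$ is a monomorphism for every $\B$-monomorphism $h$. I want to deduce $F \in \varinjlim\B$, and the natural route is via Theorem~D: it suffices to show $\Hom_\ZZ(F,\QZ) \in (\ModR)^\B$. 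By Lemma~\eqref{lem:Bsplit} and Definition~\eqref{dfn:cosupport} (together with the characterization of $(\ModR)^\B$ via the injective structure in \eqref{bfhpg:injective_structures}), it is enough to check that $\Hom_{\Rop}(h,\Hom_\ZZ(F,\QZ))$ is surjective for every $\B$-monomorphism $h$; but via the adjunction isomorphism $\Hom_{\Rop}(h,\Hom_\ZZ(F,\QZ)) \cong \Hom_\ZZ(h\otimes_R F,\QZ)$ this surjectivity is exactly the hypothesis that $h \otimes_R F$ is a monomorphism (applying $\Hom_\ZZ(-,\QZ)$ to a monomorphism of abelian groups yields a surjection). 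So part~(a) follows.

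For part~(b), I would start from the exact and $\Hom_R(\B,-)$-exact sequence $0 \to F' \to F \to F'' \to 0$ and apply $\Hom_\ZZ(-,\QZ)$. By Lemma~\eqref{lem:exact-exact}(a) the resulting sequence
\[
  0 \lora \Hom_\ZZ(F'',\QZ) \lora \Hom_\ZZ(F,\QZ) \lora \Hom_\ZZ(F',\QZ) \lora 0
\]
is exact and $(-\otimes_R\B)$-exact in $\ModR$. Since $F'' \in \varinjlim\B$, Theorem~D tells us $\Hom_\ZZ(F'',\QZ) \in (\ModR)^\B$, hence it is pure injective; moreover a short exact sequence that is $(-\otimes_R\B)$-exact is in particular pure exact when $R \in \B$ — but here we need only the weaker fact that the displayed sequence is $\B$-exact, so the map $\Hom_\ZZ(F'',\QZ) \to \Hom_\ZZ(F,\QZ)$ is a $\B$-monomorphism in the sense of Definition~\eqref{dfn:Bmono}. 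By Lemma~\eqref{lem:Bsplit} it is then a split monomorphism, so the displayed sequence splits and $\Hom_\ZZ(F,\QZ) \cong \Hom_\ZZ(F'',\QZ) \oplus \Hom_\ZZ(F',\QZ)$.

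Now the equivalence is a matter of propagating membership in $(\ModR)^\B$ across this split sequence and translating back via Theorem~D. If $F \in \varinjlim\B$, then $\Hom_\ZZ(F,\QZ) \in (\ModR)^\B$ by Theorem~D; as a direct summand of it, $\Hom_\ZZ(F',\QZ) \in (\ModR)^\B$ as well (the class $(\ModR)^\B = \Prod\{\cdots\}$ is closed under direct summands by definition), and Theorem~D again gives $F' \in \varinjlim\B$. Conversely, if $F' \in \varinjlim\B$ then $\Hom_\ZZ(F',\QZ) \in (\ModR)^\B$, and since $(\ModR)^\B$ is closed under finite direct sums (being closed under products and summands), the split sequence yields $\Hom_\ZZ(F,\QZ) \in (\ModR)^\B$, whence $F \in \varinjlim\B$ by Theorem~D.

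The main obstacle I anticipate is the bookkeeping around exactness versus purity: one must be careful that the dualized sequence is genuinely $(-\otimes_R\B)$-exact (not merely right exact after tensoring), so that the first map really is a $\B$-monomorphism to which Lemma~\eqref{lem:Bsplit} applies; this is exactly what Lemma~\eqref{lem:exact-exact}(a) is designed to supply, given that the original sequence is assumed $\Hom_R(\B,-)$-exact. Everything else is a routine chain of adjunction isomorphisms and invocations of Theorem~D and Lemma~\eqref{lem:Bsplit}.
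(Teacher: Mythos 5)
Your argument is correct and uses exactly the ingredients the paper flags before stating the corollary (the injective structure \eqref{bfhpg:injective_structures}, Lemma~\eqref{lem:Bsplit}, Lemma~\eqref{lem:exact-exact}, and Theorem~D): in part~(a) the converse is reduced via the Maranda/Enochs-Jenda-Xu injective structure to the surjectivity of $\Hom_{\Rop}(h,\Hom_\ZZ(F,\QZ))$, which the adjunction translates into the hypothesis; and in part~(b) dualizing, invoking \eqref{lem:exact-exact}(a) and \eqref{lem:Bsplit} to split, and then transferring membership back and forth with Theorem~D is precisely the intended route. No gaps.
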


\begin{obs}
  \label{obs:coh-coh-noet-noet}
  Note that if $R\in\varinjlim\B$, Proposition
  \eqref{prp:supp_converse}/\eqref{prp:cosupp_converse} implies that
  the ring $R$ is $\B$-coherent/-noetherian in the sense of Definition
  \eqref{dfn:Bcoherent}/\eqref{dfn:Bnoetherian} if and only if the
  category $\B$ is left coherent/noetherian in the sense of Definition
  \eqref{dfn:cat_noeth}.
\end{obs}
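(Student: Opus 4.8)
The plan is to transport the two defining closure properties back and forth across the equivalences $\varinjlim\B \simeq \Flat(\Bop,\Ab)$ of Lenzing \cite[prop.~2.4]{HL83} and $(\ModR)^\B \simeq \Inj(\B,\Ab)$ of Theorem~A, using the second assertions of Propositions \eqref{prp:supp_converse} and \eqref{prp:cosupp_converse} --- which are available precisely because $R\in\varinjlim\B$ --- to come back from the functor categories to $\RMod$ and $\ModR$. Recall first the ``dictionary'' we need: by Definition \eqref{dfn:Bcoherent} together with Theorem \eqref{thm:Bcoherent}, the ring $R$ is $\B$-coherent if and only if $\varinjlim\B$ is closed under products in $\RMod$; by Definition \eqref{dfn:Bnoetherian} together with Theorem~C, $R$ is $\B$-noetherian if and only if $(\ModR)^\B$ is closed under coproducts in $\ModR$; and by Definition \eqref{dfn:cat_noeth} (possibly after invoking the classical characterisations of locally coherent and locally noetherian Grothendieck categories) $\B$ is left coherent if and only if $\Flat(\Bop,\Ab)$ is closed under products, and left noetherian if and only if $\Inj(\B,\Ab)$ is closed under coproducts.

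For the coherent case I would argue as follows. The Yoneda functor $M\mapsto \Hom_R(-,M)|_\B$ commutes with products, since products in $(\Bop,\Ab)$ are formed pointwise and $\Hom_R$ preserves products in its second variable. Assume $\varinjlim\B$ is closed under products and let $\{F_i\}$ be a family of flat functors; writing $F_i\simeq\Hom_R(-,M_i)|_\B$ with $M_i\in\varinjlim\B$ we get $\prod_iF_i\simeq\Hom_R(-,\prod_iM_i)|_\B$, which is the Yoneda image of $\prod_iM_i\in\varinjlim\B$ and hence flat by Lenzing. Conversely, assume $\Flat(\Bop,\Ab)$ is closed under products and let $M_i\in\varinjlim\B$; then $\Hom_R(-,\prod_iM_i)|_\B\simeq\prod_i\Hom_R(-,M_i)|_\B$ is flat, so Proposition \eqref{prp:supp_converse} forces $\prod_iM_i\in\varinjlim\B$. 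Combined with the dictionary this gives ``$R$ is $\B$-coherent $\iff$ $\B$ is left coherent''.

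The noetherian case is the strict analogue: replace the Yoneda functor by the tensor embedding $N\mapsto(N\otimes_R-)|_\B$, which commutes with coproducts by Proposition \eqref{prp:formula}(a) (coproducts in $(\B,\Ab)$ are pointwise and $-\otimes_RB$ preserves coproducts); replace Lenzing's equivalence by Theorem~A; and replace Proposition \eqref{prp:supp_converse} by Proposition \eqref{prp:cosupp_converse}. Then closure of $(\ModR)^\B$ under coproducts pushes forward to closure of $\Inj(\B,\Ab)$ under coproducts, and conversely, if $(\coprod_iI_i\otimes_R-)|_\B\simeq\coprod_i(I_i\otimes_R-)|_\B$ is injective for $I_i\in(\ModR)^\B$, then Proposition \eqref{prp:cosupp_converse} yields $\coprod_iI_i\in(\ModR)^\B$; with the dictionary this gives ``$R$ is $\B$-noetherian $\iff$ $\B$ is left noetherian''.

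Essentially everything here is formal bookkeeping with equivalences and (co)product-preserving functors. The only place where the hypothesis $R\in\varinjlim\B$ is genuinely used --- and hence the only step where anything could go wrong --- is the ``conversely'' direction in each case, where one must upgrade a flatness (resp.\ injectivity) statement about an object of the functor category to a membership statement in $\varinjlim\B$ (resp.\ $(\ModR)^\B$); but this is exactly what the second halves of Propositions \eqref{prp:supp_converse} and \eqref{prp:cosupp_converse} provide, so no real obstacle remains.
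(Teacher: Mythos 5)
Your proof is correct and fills in precisely the bookkeeping that the paper's one-sentence Observation leaves implicit: identify $\B$-coherence with closure of $\varinjlim\B$ under products (Theorem \eqref{thm:Bcoherent}(\textit{i})), $\B$-noetherianness with closure of $(\ModR)^\B$ under coproducts (Theorem~C(\textit{i})), transport these properties across the Yoneda equivalence $\varinjlim\B\simeq\Flat(\Bop,\Ab)$ and the tensor-embedding equivalence $(\ModR)^\B\simeq\Inj(\B,\Ab)$ using (co)product preservation of the two embeddings, and in the return direction invoke the second halves of Propositions \eqref{prp:supp_converse} and \eqref{prp:cosupp_converse}, which is exactly where $R\in\varinjlim\B$ is used. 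Two very minor remarks: the parenthetical about ``classical characterisations of locally coherent and locally noetherian Grothendieck categories'' is superfluous, since Definition \eqref{dfn:cat_noeth} already states left coherence/noetherianness of $\B$ directly as the two closure conditions you need; and Proposition \eqref{prp:formula}(a) records commutation with filtered colimits and products but not coproducts explicitly --- your independent pointwise justification (or, alternatively, additivity plus filtered colimits) correctly closes that small gap.
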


\begin{thm}
  \label{thm:B-inj_B-flat}
  Assume that $R$ is in $\varinjlim\B$ and that $R$ is $\B$-coherent.
  Then a right $R$-module $E$ is in $(\ModR)^\B$ only if
  $\,\Hom_{\ZZ}(E,\QZ)$ is in $\varinjlim\B$.
\end{thm}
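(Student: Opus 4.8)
The plan is to transport the assertion into the functor categories $(\B,\Ab)$ and $(\Bop,\Ab)$, where it becomes the statement that Pontryagin duality carries injective functors to flat functors. Let $E$ be a right $R$-module with cosupport in $\B$. Since $R\in\varinjlim\B$, Proposition \eqref{prp:cosupp_converse} (or Theorem~A directly) puts $(E\otimes_R-)|_\B$ in $\Inj(\B,\Ab)$; and, again because $R\in\varinjlim\B$, Proposition \eqref{prp:supp_converse} tells us that $\Hom_\ZZ(E,\QZ)$ will lie in $\varinjlim\B$ as soon as the functor $\Hom_R\bigl(-,\Hom_\ZZ(E,\QZ)\bigr)|_\B$ lies in $\Flat(\Bop,\Ab)$. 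So the first step is to reduce the theorem to proving this last membership.

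Next I would recognize that functor as a Pontryagin dual. For each $B\in\B$ the hom-tensor adjunction \cite[prop.~II.5.2]{CE56} gives a natural isomorphism $\Hom_R\bigl(B,\Hom_\ZZ(E,\QZ)\bigr)\cong\Hom_\ZZ(E\otimes_RB,\QZ)$, contravariant in $B$, hence a natural equivalence of functors $\Bop\to\Ab$,
\[
  \Hom_R\bigl(-,\Hom_\ZZ(E,\QZ)\bigr)\big|_\B \;\simeq\; \Hom_\ZZ\bigl((E\otimes_R-)|_\B,\QZ\bigr).
\]
Thus the whole theorem reduces to the purely functor-theoretic claim: \emph{if $F\in\Inj(\B,\Ab)$ then $\Hom_\ZZ(F,\QZ)\in\Flat(\Bop,\Ab)$}, applied with $F=(E\otimes_R-)|_\B$. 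This claim is exactly one of the two results on injective and flat functors established in Appendix \ref{sec:app}, and it is where the real work lies; the two reductions above are just bookkeeping with the Lenzing-type equivalences and a standard adjunction.

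For completeness, the idea behind the appendix result is as follows. Since the representable functors $\B(B,-)$ are finitely presented generators, $(\B,\Ab)$ is locally finitely presented, so flatness of $\Hom_\ZZ(F,\QZ)$ need only be tested on finitely presented functors $G\in(\B,\Ab)$; for such $G$ one verifies on representables — using \eqref{bfhpg:flat_functors}(b) and Yoneda — a natural isomorphism $\Hom_\ZZ(F,\QZ)\otimes_\B G\cong\Hom_\ZZ\bigl((\B,\Ab)(G,F),\QZ\bigr)$, whose right-hand side is exact in $G$ precisely because $F$ is injective, and one then extends to arbitrary $G$ by writing it as a filtered colimit of finitely presented functors and using that the derived functors of $\otimes_\B$ commute with filtered colimits. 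The hypothesis that $R$ is $\B$-coherent — equivalently, by Observation \eqref{obs:coh-coh-noet-noet}, that $\B$ is coherent — is the one invoked in the appendix to keep the finitely presented functors under control while running this argument. I expect the delicate points to be the verification of this $\otimes_\B$–$\Hom$ isomorphism on finitely presented functors and the passage from finitely presented $G$ to arbitrary $G$ via filtered colimits; everything else is formal.
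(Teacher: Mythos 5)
Your proof is correct and follows essentially the same route as the paper: translate to the functor categories via Propositions \eqref{prp:cosupp_converse} and \eqref{prp:supp_converse}, rewrite $\Hom_R(-,\Hom_\ZZ(E,\QZ))|_\B$ as the Pontryagin dual of $(E\otimes_R-)|_\B$ by adjunction, and invoke Proposition \eqref{prp:dual_to_injective} together with Observation \eqref{obs:coh-coh-noet-noet}. (Your closing sketch of the appendix result departs slightly from the paper's actual argument — which uses the Baer-type flatness criterion of Remark \eqref{rmk:Baer} on finitely generated subfunctors of representables together with Lemma \eqref{lem:homeval}, rather than a filtered-colimit reduction — but since you defer to the appendix result itself, this does not affect the proof of the theorem.)
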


\begin{proof}
  We have the following implications,
  \begin{align*}
    E \text{ is in } (\ModR)^\B 
     &\ \Longleftrightarrow \ 
   (E \otimes_R-)|_\B \text{ is in } \Inj(\B,\Ab) \\
     &\ \,\Longrightarrow \ 
   \Hom_\ZZ((E \otimes_R-)|_\B,\QZ) \text{ is in } \Flat(\Bop,\Ab) \\
     &\ \Longleftrightarrow \ 
   \Hom_R(-,\Hom_\ZZ(E,\QZ))|_\B \text{ is in } \Flat(\Bop,\Ab) \\
     &\ \Longleftrightarrow \ 
   \Hom_\ZZ(E,\QZ) \text{ is in } \varinjlim\B.
  \end{align*}
  The first and last equivalences follow from Propositions
  \eqref{prp:cosupp_converse} and \eqref{prp:supp_converse}, and the
  penultimate equivalence is by adjunction. The implication in the
  second line is immediate by Observation
  \eqref{obs:coh-coh-noet-noet} and Proposition
  \eqref{prp:dual_to_injective}.
\end{proof}

\enlargethispage{7ex}

A result by Gruson and Jensen \cite{LG-CUJ-80} and Enochs
\cite[lem.~1.1]{EEE87} asserts that over a right coherent ring, the
pure injective envelope of a flat left $R$-module is again flat.  In
view of Example \eqref{exa:coherent}(b), we have the following
generalization.

\begin{cor}
  \label{cor:PE}
  Assume that \mbox{$R\in\varinjlim\B$} and that $R$ is $\B$-coherent.
  If $F$ is in $\varinjlim\B$ then its pure injective envelope $PE(F)$
  and the quotient $PE(F)/F$ are in $\varinjlim\B$.
\end{cor}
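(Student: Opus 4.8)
The plan is to exhibit the pure injective envelope $PE(F)$ as a specific module with cosupport in $\B$ and then invoke the stability theorems to transfer flatness-type information back and forth. First I would recall from Example \eqref{exa:coherent}(b) that, when $\B=R\text{-}\proj$, one has $\varinjlim\B=R\text{-}\Flat$ and $(\ModR)^\B=\Inj\text{-}R$, so the classical Gruson--Jensen--Enochs statement is literally the case of the present corollary with that choice of $\B$; thus we should simply mimic that argument in the general $\B$-relative setting using Theorems D and E (and Theorem B) in place of the corresponding classical facts. The key observation is that $PE(F)$ computed in $\ModR$ is, by Theorem B together with Example \eqref{exa:ordinary_inj}(b), the $\PureInj\text{-}R$-envelope, i.e.\ the $(\ModR)^{R\text{-}\mod}$-envelope of $F$; but under our hypotheses ($R\in\varinjlim\B$ and $R$ is $\B$-coherent), Theorem \eqref{thm:B-inj_B-flat} shows that whenever $E\in(\ModR)^\B$ then $\Hom_\ZZ(E,\QZ)\in\varinjlim\B$, and Theorem D then gives $\Hom_\ZZ(\Hom_\ZZ(E,\QZ),\QZ)\in(\ModR)^\B$, so that the double dual machinery applies.

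The main steps, in order, would be: (1) Apply Theorem D to $F\in\varinjlim\B$ to get $\Hom_\ZZ(F,\QZ)\in(\ModR)^\B$. (2) Apply Theorem \eqref{thm:B-inj_B-flat} to the module $\Hom_\ZZ(F,\QZ)$: since it lies in $(\ModR)^\B$, its $\QZ$-dual $D(F):=\Hom_\ZZ(\Hom_\ZZ(F,\QZ),\QZ)$ lies in $\varinjlim\B$; dualizing once more and using Theorem D, $\Hom_\ZZ(D(F),\QZ)\in(\ModR)^\B$. More directly, $D(F)$ is a pure injective left $R$-module (a $\QZ$-dual is always pure injective) lying in $\varinjlim\B$, and the canonical pure monomorphism $F\lora D(F)$ realizes $D(F)$ as containing a pure injective envelope of $F$ as a direct summand. (3) Since $\varinjlim\B$ is closed under direct summands (it is a $\Prod$-like/definable-type class; more carefully, by Theorem D it is closed under summands because $(\ModR)^\B$ is and $\QZ$-duality respects summands, or one cites that $\varinjlim\B$ is always closed under summands), the envelope $PE(F)$, being a direct summand of $D(F)$, lies in $\varinjlim\B$. (4) For the quotient: from Theorem B, the envelope map $h\colon F\lora PE(F)$ is a $\B$-essential $\B$-monomorphism; in particular the sequence $0\to F\to PE(F)\to PE(F)/F\to 0$ is a $\B$-monomorphism sequence, hence pure exact (a $\B$-monomorphism out of a module in $\varinjlim\B$ stays a monomorphism after $-\otimes_R B$, but purity needs the sequence to be $(-\otimes_R B)$-exact for all $B$; one checks this from $\B$-essentiality, or argues the pure injective envelope sequence is pure exact by the classical fact). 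Then Corollary \eqref{cor:Lenzing-like}(b), applied to the exact and $\Hom_R(\B,-)$-exact sequence obtained by $\QZ$-dualizing and using Lemma \eqref{lem:exact-exact}, gives $PE(F)/F\in\varinjlim\B$ from $F, PE(F)\in\varinjlim\B$.

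The step I expect to be the main obstacle is the bookkeeping in (4): one must verify that the defining short exact sequence of $PE(F)/F$ is genuinely pure exact (equivalently $(-\otimes_R\B)$-exact on the relevant side after the right dualization), so that Corollary \eqref{cor:Lenzing-like}(b) — which requires the sequence to be both exact and $\Hom_R(\B,-)$-exact — actually applies. The cleanest route is probably to dualize into $\ModR$: from $0\to F\to PE(F)\to PE(F)/F\to 0$ pure exact in $R\text{-}\Mod$ one gets a split, hence $\Hom_R(\B,-)$-exact, sequence after $\Hom_\ZZ(-,\QZ)$, then transfer back via Lemma \eqref{lem:exact-exact}(a) and Theorem D applied to all three terms. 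The rest is a direct chaining of Theorems D and E (via \eqref{thm:B-inj_B-flat}) and poses no real difficulty; the generalization of Gruson--Jensen--Enochs is then immediate by specializing to $\B=R\text{-}\proj$ as in Example \eqref{exa:coherent}(b).
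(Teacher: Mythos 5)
Your argument for $PE(F)$ itself is exactly the paper's: apply Theorem D to get $\Hom_\ZZ(F,\QZ)\in(\ModR)^\B$, apply Theorem \eqref{thm:B-inj_B-flat} to get $D(F)=\Hom_\ZZ(\Hom_\ZZ(F,\QZ),\QZ)\in\varinjlim\B$, note the canonical pure monomorphism $F\to D(F)$ into the pure injective $D(F)$ realizes $PE(F)$ as a direct summand of $D(F)$, and use closure of $\varinjlim\B$ under summands. No issues there.

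For $PE(F)/F$ your first-choice reference is misapplied. Corollary \eqref{cor:Lenzing-like}(b) assumes the cokernel $F''$ is in $\varinjlim\B$ and then relates the sub and the middle term; in the sequence $0\to F\to PE(F)\to PE(F)/F\to 0$ the cokernel is the very module you are trying to place in $\varinjlim\B$, so that corollary gives nothing in the direction you need. However, the ``cleanest route'' you then sketch does work and makes the earlier reference unnecessary: since $F\to PE(F)$ is a pure monomorphism, $\Hom_\ZZ(-,\QZ)$ turns $0\to F\to PE(F)\to PE(F)/F\to 0$ into a split short exact sequence of right $R$-modules, so $\Hom_\ZZ(PE(F)/F,\QZ)$ is a direct summand of $\Hom_\ZZ(PE(F),\QZ)$, which lies in $(\ModR)^\B$ by Theorem D; then closure of $(\ModR)^\B$ under summands and Theorem D again give $PE(F)/F\in\varinjlim\B$. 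The paper instead handles this last step by citing Lenzing's \cite[prop.~2.2]{HL83}, which packages the same pure-exact-sequence closure property of $\varinjlim\B$; your duality argument is a perfectly good self-contained substitute, but you should drop the incorrect appeal to \eqref{cor:Lenzing-like}(b).
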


\begin{proof}
  We have a pure monomorphism \mbox{$F \lora
    D(F)=\Hom_\ZZ(\Hom_\ZZ(F,\QZ),\QZ)$}, and since $D(F)$ is pure
  injective it contains $PE(F)$ as a direct summand. Theorems D and
  \eqref{thm:B-inj_B-flat} imply that $D(F)$ is in $\varinjlim\B$, and
  we conclude that $PE(F)$ is in $\varinjlim\B$.  That $PE(F)/F$ is in
  $\varinjlim\B$ now follows from \cite[prop.~2.2]{HL83}.
\end{proof}


\begin{proof}[\textbf{Proof of Theorem E}]
  In view of the proof of Theorem \eqref{thm:B-inj_B-flat}, Theorem E
  is an immediate consequence of Observation
  \eqref{obs:coh-coh-noet-noet} and Proposition \eqref{prp:Bnoet-eqc}.
\end{proof}

\appendix

\section{Two results on flat and injective functors} \label{sec:app}

Propositions \eqref{prp:dual_to_injective} and \eqref{prp:Bnoet-eqc}
below play a central role in the proofs of Theorems
\eqref{thm:B-inj_B-flat} and E.  Since we have not been able to find
proofs of \eqref{prp:dual_to_injective} or \eqref{prp:Bnoet-eqc} in
the literature, they are included in this appendix.

\begin{lem}
  \label{lem:homeval}
  For $F, G \in (\B,\Ab)$ and $A \in \Ab$ there is a canonical
  homomorphism,
  \begin{displaymath}
    \xymatrix@C=8ex{
    \Hom_\ZZ(G,A) \otimes_\B F \ar[r]^-{\omega_{GAF}} &
    \Hom_\ZZ((\B,\Ab)(F,G),A).
    }
  \end{displaymath}
  If $A$ is injective (divisible) and $F$ is finitely presented then
  $\omega_{GAF}$ is an isomorphism.
\end{lem}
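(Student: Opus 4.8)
\textbf{Proof plan for Lemma \eqref{lem:homeval}.}
The construction of the map $\omega_{GAF}$ is formal: an element of $\Hom_\ZZ(G,A)\otimes_\B F$ is a (finite sum of) pairs $\psi\otimes x$ with $\psi\in\Hom_\ZZ(G,A)=\Hom_\ZZ(G(-),A)$ a natural transformation $G\Rightarrow \underline{A}$ (where $\underline A$ is the constant functor, viewed in $(\Bop,\Ab)$ via the appropriate variance) and $x\in FB$ for some $B\in\B$; and an element of $(\B,\Ab)(F,G)$ is a natural transformation $\phi\colon F\Rightarrow G$. I would define $\omega_{GAF}(\psi\otimes x)$ to be the homomorphism sending $\phi$ to $\psi_B(\phi_B(x))\in A$. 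The plan is first to check that this is well defined, i.e.\ that it respects the tensor relations over $\B$ (bilinearity and the relation $\psi f\otimes x = \psi\otimes Ff(x)$ for morphisms $f$ in $\B$), which is a direct diagram chase using naturality of $\psi$ and $\phi$; naturality of $\omega$ in all three variables is equally routine.

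For the isomorphism claim I would reduce to the representable case by dévissage. First take $F=\B(C,-)$ for $C\in\B$. Then by the Yoneda lemma $(\B,\Ab)(\B(C,-),G)\cong GC$ naturally, and by \eqref{bfhpg:flat_functors}(b) we have $\Hom_\ZZ(G,A)\otimes_\B \B(C,-)\cong\Hom_\ZZ(G,A)(C)=\Hom_\ZZ(GC,A)$; under these identifications one checks that $\omega_{G,A,\B(C,-)}$ is exactly the identity of $\Hom_\ZZ(GC,A)$ (this is a matter of tracing the Yoneda isomorphism through the definition of $\omega$). Hence $\omega$ is an isomorphism whenever $F$ is representable, and then also when $F$ is a finite coproduct of representables, since all three functors of $F$ appearing in the statement send finite coproducts to products (the tensor product is right exact and additive; $\Hom_\ZZ(-,A)$ converts coproducts to products; $(\B,\Ab)(-,G)$ does too) and $\omega$ is a natural transformation between them.

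Now suppose $F$ is finitely presented, with a presentation $\B(C_1,-)\xrightarrow{\,\alpha\,}\B(C_0,-)\to F\to 0$ in $(\B,\Ab)$, cf.\ \eqref{bfhpg:functor_categories}. Apply the three functors $T_1(-):=\Hom_\ZZ(G,A)\otimes_\B(-)$, $T_2(-):=\Hom_\ZZ((\B,\Ab)((-),G),A)$. Because $A$ is injective (divisible) as an abelian group, $\Hom_\ZZ(-,A)$ is exact, so $T_2$ is exact on the right half of this presentation: applying $(\B,\Ab)(-,G)$ to $\B(C_1,-)\to\B(C_0,-)\to F\to 0$ gives $0\to(\B,\Ab)(F,G)\to GC_0\to GC_1$, and then $\Hom_\ZZ(-,A)$ produces a right-exact sequence $\Hom_\ZZ(GC_1,A)\to\Hom_\ZZ(GC_0,A)\to\Hom_\ZZ((\B,\Ab)(F,G),A)\to 0$. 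On the other side, $T_1$ is right exact by \eqref{bfhpg:flat_functors} (the tensor product $-\otimes_\B?$ is right exact), giving a right-exact sequence $T_1\B(C_1,-)\to T_1\B(C_0,-)\to T_1 F\to 0$. The naturality square for $\omega$ now compares these two right-exact three-term sequences; since $\omega$ is an isomorphism on the two representable terms $\B(C_0,-)$ and $\B(C_1,-)$ by the previous paragraph, the five lemma (for right-exact sequences, i.e.\ cokernels) forces $\omega_{GAF}$ to be an isomorphism. The main obstacle I anticipate is purely bookkeeping: getting the variances straight — $\Hom_\ZZ(G,A)$ must be read as an object of $(\Bop,\Ab)$ so that $\Hom_\ZZ(G,A)\otimes_\B F$ makes sense — and verifying that $\omega$ really is natural and reduces to the identity in the representable case; once that is pinned down, the homological reduction is standard.
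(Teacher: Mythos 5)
Your proposal is correct and follows essentially the same route as the paper: reduce to representable $F$ via Yoneda and \eqref{bfhpg:flat_functors}(b), then transfer to finitely presented $F$ by right-exactness of both sides (using injectivity of $A$) and a five-lemma argument on a presentation $\B(C_1,-)\to\B(C_0,-)\to F\to 0$. The only real difference is cosmetic: the paper builds $\omega$ from a natural transformation $\varpi$ via the tensor--Hom adjunction of \eqref{bfhpg:flat_functors}(a), which makes well-definedness automatic, whereas you define it elementwise and check the tensor relations by hand (and note that the generators $\psi\otimes x$ of $\Hom_\ZZ(G,A)\otimes_\B F$ have $\psi\in\Hom_\ZZ(GB,A)$ for a single object $B$, not natural transformations $G\Rightarrow\underline A$, though your formula $\psi(\phi_B(x))$ is still the right one); the finite-coproduct step you insert is also unnecessary once right-exactness is in hand.
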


\begin{proof}
  For each $B$ in $\B$ there is canonical homomorphism of abelian
  groups,
  \begin{displaymath} 
    \xymatrix{
    \Hom_\ZZ(GB,A) \ar[r]^-{\varpi_B} &
    \Hom_\ZZ(FB,\Hom_\ZZ((\B,\Ab)(F,G),A)),
    }
  \end{displaymath}
  It is given by \mbox{$\varpi_B(f)(x)(\theta)=(f \circ \theta_B)(x)$}
  where \mbox{$f \colon GB \lora A$} is a homomorphism, \mbox{$x \in
    FB$} is an element, and \mbox{$\theta \colon F \lora G$} is a
  natural transformation. It is easily seen that $\varpi$ is a natural
  transformation of functors $\Bop \lora \Ab$. By applying
  \eqref{bfhpg:flat_functors}(a),
  \begin{align*} 
    \varpi &\in (\Bop,\Ab)\big( \Hom_\ZZ(G,A),
    \Hom_\ZZ(F,\Hom_\ZZ((\B,\Ab)(F,G),A)) \big) \\
    &\cong \Hom_\ZZ\!\big(
    \Hom_\ZZ(G,A) \otimes_\B F,
    \Hom_\ZZ((\B,\Ab)(F,G),A)\big), 
  \end{align*}
  it follows that $\varpi$ corresponds to the homomorphism which is
  denoted by $\omega$ in the lemma. It is straightforward to verify
  that $\omega$ is natural in $F$, $G$, and $A$.
  
  To see that $\omega_{GAF}$ is an isomorphism when $A$ is injective
  and $F$ is finitely presented, note that $\omega_{G?A}$ is a natural
  transformation between right exact and additive functors $(\B,\Ab)
  \lora \Ab$. As every finitely presented $F$ fits into an exact
  sequence,
  \begin{displaymath}
    \B(B_1,-) \lora \B(B_0,-) \lora F(-) \lora 0,
  \end{displaymath}
  it suffices, by the five-lemma, to check that $\omega_{G,\B(B,-),A}$
  is an isomorphism. However, this homomorphism is the composition if
  the following two isomorphismsm:
  \begin{displaymath}
    \Hom_\ZZ(G,A) \otimes_\B \B(B,-) \stackrel{\cong}{\lora}
    \Hom_\ZZ(GB,A) \stackrel{\cong}{\lora}
    \Hom_\ZZ((\B,\Ab)(\B(B,-),G),A),
  \end{displaymath}
  where the left-hand isomorphism is by
  \eqref{bfhpg:flat_functors}(b), and the right-hand isomorphism is by
  Yoneda's Lemma, cf.~\cite[III.\S2]{SML-book}. This finishes the
  proof.
\end{proof}

\begin{rmk}
  \label{rmk:Baer}
  By Oberst-R{\"o}hrl \cite[(proof of) thm.~(3.2)]{OR70} a functor
  $T$ in $(\Bop,\Ab)$ is flat if for every finitely generated additive
  subfunctor $F$ of a representable functor $\B(B,-)$, one has
  exactness of the sequence:
  \begin{displaymath}
    0 \lora T\otimes_\B F \lora T\otimes_\B\B(B,-).
  \end{displaymath}
  Although the author was not able to find a reference, it is
  well-known that Baer's Criterion holds in functor
  categories\footnote{\ One way to prove this is by combining the
    proof of Anderson-Fuller \cite[prop.~16.13]{FWA-KRF-92} with the
    first line in the proof of Lemma \eqref{lem:embedding}.}, that is,
  $E$ in $(\B,\Ab)$ is injective if for every additive subfunctor $G$
  of $\B(B,-)$, one has exactness of the sequence:
  \begin{displaymath}
    (\B,\Ab)(\B(B,-),E) \lora (\B,\Ab)(G,E) \lora 0.
  \end{displaymath}
\end{rmk}

\begin{dfn}
  \label{dfn:cat_noeth}
  The category $\B$ is \textsl{left coherent} if $\Flat(\Bop,\Ab)$ is
  closed under products in $(\Bop,\Ab)$; and $\B$ is \textsl{right
    coherent} if $\Bop$ is left coherent.

  The category $\B$ is \textsl{left noetherian} if $\Inj(\B,\Ab)$ is
  closed under coproducts in $(\B,\Ab)$; and $\B$ is \textsl{right
    noetherian} if $\Bop$ is left noetherian.
\end{dfn}

\begin{rmk}
  \label{rmk:coh-noet}
  Jensen-Lenzing \cite[thm.~B.17]{CUJ-HL-book} and Oberst-R{\"o}hrl
  \cite[thm.~(4.1)]{OR70} contain several equivalent characterizations
  of the notions above. For example, $\B$ if left coherent if and only
  if every \textsl{finitely generated} additive subfunctor of
  $\B(B,-)$ is finitely presented; and $\B$ if left noetherian if and
  only if \textsl{every} additive subfunctor of $\B(B,-)$ is finitely
  generated (and thus, finitely presented).
\end{rmk}

\begin{prp}
  \label{prp:dual_to_injective}
  Assume that $\B$ is left coherent. Then $E$ belongs to
  $\Inj(\B,\Ab)$ only if $\,\Hom_\ZZ(E,\QZ)$ belongs to
  $\Flat(\Bop,\Ab)$.
\end{prp}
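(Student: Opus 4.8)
The plan is to verify that $D(E):=\Hom_\ZZ(E,\QZ)$, regarded as an object of $(\Bop,\Ab)$, is flat by means of the criterion recalled in Remark~\eqref{rmk:Baer}: it suffices to show that for every representable functor $\B(B,-)$ and every finitely generated additive subfunctor $\iota\colon F \hookrightarrow \B(B,-)$, the sequence
\begin{displaymath}
  0 \lora D(E)\otimes_\B F \lora D(E)\otimes_\B\B(B,-)
\end{displaymath}
induced by $\iota$ is exact. This is the only place where the hypothesis is used: since $\B$ is left coherent, Remark~\eqref{rmk:coh-noet} guarantees that such an $F$ is automatically finitely presented, so that Lemma~\eqref{lem:homeval} applies --- with the divisible group $A=\QZ$ --- both to $F$ and to the (representable, hence finitely presented) functor $\B(B,-)$.

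First I would invoke the natural isomorphism $\omega$ of Lemma~\eqref{lem:homeval}, specialized to $G=E$ and $A=\QZ$, and use its naturality in the variable $F$ to identify the map $D(E)\otimes_\B F \lora D(E)\otimes_\B\B(B,-)$ with
\begin{displaymath}
  \Hom_\ZZ\!\big((\B,\Ab)(\iota,E),\QZ\big)\colon\
  \Hom_\ZZ\!\big((\B,\Ab)(F,E),\QZ\big) \lora
  \Hom_\ZZ\!\big((\B,\Ab)(\B(B,-),E),\QZ\big),
\end{displaymath}
where $(\B,\Ab)(\iota,E)$ denotes restriction of natural transformations along $\iota$. (By Yoneda's Lemma and \eqref{bfhpg:flat_functors}(b) the target may be rewritten as $\Hom_\ZZ(EB,\QZ)$, but this is not needed.) Next, since $E$ is injective in $(\B,\Ab)$, applying $(\B,\Ab)(-,E)$ to the monomorphism $0 \to F \stackrel{\iota}{\lora} \B(B,-)$ yields a surjection $(\B,\Ab)(\B(B,-),E) \twoheadrightarrow (\B,\Ab)(F,E)$, and applying the exact functor $\Hom_\ZZ(-,\QZ)$ turns this surjection into a monomorphism. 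Hence the displayed map is injective, and therefore so is $D(E)\otimes_\B F \lora D(E)\otimes_\B\B(B,-)$; this is exactly the exactness required by Remark~\eqref{rmk:Baer}, so $\Hom_\ZZ(E,\QZ)$ is flat.

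The one step that deserves care is the application of Lemma~\eqref{lem:homeval}, which needs the test subfunctors to be finitely \emph{presented} and not merely finitely generated --- precisely the content of left coherence, so the hypothesis is used essentially once. Everything else --- exactness of $\Hom_\ZZ(-,\QZ)$, injectivity of $E$, and naturality of $\omega$ --- is formal, so I expect no real obstacle beyond bookkeeping.
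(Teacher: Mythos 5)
Your proof is correct and follows essentially the same route as the paper: reduce to Remark~\eqref{rmk:Baer}, use left coherence so the test subfunctor $F$ is finitely presented, apply the natural isomorphism of Lemma~\eqref{lem:homeval} with $A=\QZ$ to identify $D(E)\otimes_\B\iota$ with $\Hom_\ZZ((\B,\Ab)(\iota,E),\QZ)$, and then use injectivity of $E$ together with exactness of $\Hom_\ZZ(-,\QZ)$. You have merely made explicit the final step (injectivity of the bottom row of the paper's commutative diagram) that the paper leaves to the reader.
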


\begin{proof}
  Let $F$ be a finitely generated additive subfunctor of $\B(B,-)$.
  As $\B$ is left coherent, $F$ is finitely presented by Remark
  \eqref{rmk:coh-noet}, and consequently Lemma \eqref{lem:homeval}
  gives the vertical isomorphisms in the following commutative
  diagram,
  \begin{displaymath}
    \xymatrix{
      \Hom_\ZZ(E,\QZ)\otimes_\B F \ar[r] \ar[d]_-{\cong} &
      \Hom_\ZZ(E,\QZ)\otimes_\B \B(B,-) \ar[d]^-{\cong} \\
      \Hom_\ZZ((\B,\Ab)(F,E),\QZ) \ar[r] &
      \Hom_\ZZ((\B,\Ab)(\B(B,-),E),\QZ).
    }
  \end{displaymath}
  The desired conclusion now follows from Remark \eqref{rmk:Baer}.
\end{proof}

\begin{prp}
  \label{prp:Bnoet-eqc}
  The category $\B$ is left noetherian if and only if it satisfies:
  \begin{rqm}
  \item $\B$ is left coherent, and
  \item Any functor $E$ is in $\Inj(\B,\Ab)$ if only if
    $\,\Hom_\ZZ(E,\QZ)$ is in $\Flat(\Bop,\Ab)$.
  \end{rqm}
\end{prp}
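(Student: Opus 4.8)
The plan is to prove the two implications separately, in both cases exploiting that the Pontryagin dual $\Hom_\ZZ(-,\QZ)$ carries coproducts in $(\B,\Ab)$ to products in $(\Bop,\Ab)$ and is faithfully exact (as $\QZ$ is an injective cogenerator of $\Ab$), together with the versions of Baer's criterion for injectivity and flatness in functor categories recorded in Remark \eqref{rmk:Baer}.

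For the implication ``$\Leftarrow$'', assume (1) and (2) hold; I want to show $\Inj(\B,\Ab)$ is closed under coproducts. Given a family $\{E_\lambda\}$ of injective functors in $(\B,\Ab)$, computing coproducts and products objectwise gives a natural isomorphism $\Hom_\ZZ\big(\coprod_\lambda E_\lambda,\QZ\big)\simeq\prod_\lambda\Hom_\ZZ(E_\lambda,\QZ)$ in $(\Bop,\Ab)$. By (2) each $\Hom_\ZZ(E_\lambda,\QZ)$ is flat, and by (1) the class $\Flat(\Bop,\Ab)$ is closed under products (this is Definition \eqref{dfn:cat_noeth}), so $\prod_\lambda\Hom_\ZZ(E_\lambda,\QZ)$---and hence $\Hom_\ZZ(\coprod_\lambda E_\lambda,\QZ)$---is flat. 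A final application of (2) shows $\coprod_\lambda E_\lambda\in\Inj(\B,\Ab)$, so $\B$ is left noetherian.

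For the implication ``$\Rightarrow$'', assume $\B$ is left noetherian. By Remark \eqref{rmk:coh-noet} every additive subfunctor of a representable functor $\B(B,-)$ is then finitely presented; in particular every finitely generated one is, so $\B$ is left coherent, which is (1). In (2) the ``only if'' direction is precisely Proposition \eqref{prp:dual_to_injective}. For the ``if'' direction, suppose $\Hom_\ZZ(E,\QZ)$ is flat and let $G$ be an arbitrary additive subfunctor of some $\B(B,-)$; then $G$ is finitely presented by the noetherian hypothesis, so Lemma \eqref{lem:homeval} with $A=\QZ$ applies to $G$ and to $\B(B,-)$, and by naturality it yields a commutative square, induced by $G\hookrightarrow\B(B,-)$, whose vertical maps are the isomorphisms $\Hom_\ZZ(E,\QZ)\otimes_\B G\cong\Hom_\ZZ((\B,\Ab)(G,E),\QZ)$ and $\Hom_\ZZ(E,\QZ)\otimes_\B\B(B,-)\cong\Hom_\ZZ((\B,\Ab)(\B(B,-),E),\QZ)$. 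Flatness of $\Hom_\ZZ(E,\QZ)$ makes the top horizontal map a monomorphism, hence so is the bottom one; since $\QZ$ is an injective cogenerator, this forces the restriction map $(\B,\Ab)(\B(B,-),E)\lora(\B,\Ab)(G,E)$ to be an epimorphism. As $G$ was arbitrary, Baer's criterion (Remark \eqref{rmk:Baer}) gives that $E$ is injective, completing (2).

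The step I expect to be the main obstacle is the ``if'' half of (2) in this last argument: it is essential that $\B$ be left \emph{noetherian}, not merely coherent, so that \emph{every} subfunctor of a representable functor (not only the finitely generated ones) is finitely presented and Lemma \eqref{lem:homeval} can be invoked for all of them. Granting that, the argument is the dual of the proof of Proposition \eqref{prp:dual_to_injective} run in reverse, plus routine bookkeeping with the cogenerator $\QZ$.
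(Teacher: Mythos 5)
Your proof is correct and follows essentially the same route as the paper's: the ``$\Leftarrow$'' direction is the same two-line argument via the isomorphism $\Hom_\ZZ(\coprod E_\lambda,\QZ)\cong\prod\Hom_\ZZ(E_\lambda,\QZ)$, and for the ``$\Rightarrow$'' direction the paper also cites Remark \eqref{rmk:coh-noet}, Proposition \eqref{prp:dual_to_injective}, and then says the ``if'' half of (2) is proved ``by an argument similar to that found in the proof of Proposition \eqref{prp:dual_to_injective}'' using that every subfunctor of $\B(B,-)$ is finitely presented, which is exactly the argument you spell out via Lemma \eqref{lem:homeval}, flatness, faithful exactness of $\Hom_\ZZ(-,\QZ)$, and Remark \eqref{rmk:Baer}.
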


\begin{proof}
  ``If'': Let $\{E_\lambda\}$ be a family in $\Inj(\B,\Ab)$. Combining
  the isomorphism
  \begin{displaymath}
    \Hom_\ZZ(\textstyle{\coprod}E_\lambda,\QZ) \,\cong\,
    \textstyle{\prod}\Hom_\ZZ(E_\lambda,\QZ)
  \end{displaymath}
  with (1) and (2), it follows by Definition \eqref{dfn:cat_noeth}
  that $\B$ is left noetherian.

  ``Only if'': If $\B$ is left noetherian then (1) holds by Remark
  \eqref{rmk:coh-noet}, and the ``only if'' part of (2) follows from
  Proposition \eqref{prp:dual_to_injective}. Using that every additive
  subfunctor of $\B(B,-)$ is finitely presented, one proves the ``if''
  part of (2) by an argument similar to that found in the proof of
  Proposition \eqref{prp:dual_to_injective}.
\end{proof}

\providecommand{\bysame}{\leavevmode\hbox to3em{\hrulefill}\thinspace}
\providecommand{\MR}{\relax\ifhmode\unskip\space\fi MR }
\providecommand{\MRhref}[2]{%
  \href{http://www.ams.org/mathscinet-getitem?mr=#1}{#2}
}
\providecommand{\href}[2]{#2}


\end{document}